\title[Almost global existence for the equations]{Almost global existence for the Prandtl boundary layer equations}
\author[M.~Ignatova]{Mihaela Ignatova}
\author[V.~Vicol]{Vlad Vicol}
\address{Department of Mathematics, Princeton University, Princeton, NJ 08544}
\email{ignatova@math.princeton.edu}
\address{Department of Mathematics, Princeton University, Princeton, NJ 08544}
\email{vvicol@math.princeton.edu}
\theoremstyle{plain}
\newtheorem{theorem}{Theorem}[section]
\newtheorem{definition}[theorem]{Definition}
\newtheorem{lemma}[theorem]{Lemma}
\theoremstyle{definition}
\newtheorem{remark}[theorem]{Remark}
\numberwithin{equation}{section}
\renewcommand{\tilde}{\widetilde}
\def\RR{{\mathbb R}}
\def\HH{{\mathbb H}}
\def\HCal{{\mathcal H}}
\def\OO{{\mathcal O}}
\def\uu{{U}}
\def\eps{{\varepsilon}}
\def\phi{{\varphi}}
\def\tt{{\langle t \rangle}}
\def\ss{{\langle s \rangle}}
\def\yy{{y}}
\def\DWF{{\rm DawsonF}}
\def\erf{{\rm erf}}
\def\ww{\theta_\alpha}
\def\gnu{g^{(\nu)}}
\def\unu{u^{(\nu)}}
\def\vnu{v^{(\nu)}}
\def\uu{{\mathrm U}}
\def\vv{{\mathrm V}}
\begin{document}


\begin{abstract}
We consider the Prandtl boundary layer equations on the half plane, with initial datum that lies in a weighted $H^1$ space with respect to the normal variable, and is real-analytic with respect to the tangential variable. The boundary trace of the horizontal Euler flow is taken to be a constant. We prove that if the Prandtl datum lies within $\eps$ of a stable profile, then the unique solution of the Cauchy problem can be extended at least up to time $T_\eps \geq \exp(\eps^{-1}/ \log(\eps^{-1}))$ .
\end{abstract}



\maketitle


\section{Introduction}

We consider the two dimensional Prandtl boundary layer equations for the velocity field $(u^P,v^P)$
\begin{align}
&\partial_t u^P - \partial_y^2 u^P + u^P \partial_x u^P + v^P \partial_y u^P = - \partial_x p^E \label{eq:P:1} \\
&\partial_x u^P + \partial_y v^P = 0 \label{eq:P:2}
\end{align}
posed in the upper half plane $\HH = \{ (x,y) \in \RR^2 \colon y > 0\}$.  Here $p^E$ denotes the trace at $\partial \HH$ of the underlying Euler pressure. 
The boundary conditions
\begin{align} 
&u^P|_{y=0} = v^P|_{y=0} = 0 \label{eq:P:BC:1} \\
&u^P|_{y=\infty} = u^E \label{eq:P:BC:2} 
\end{align}
are obtained by matching the Navier-Stokes no-slip boundary condition $u^{NS}=0$ on $\partial \HH$, with the Euler slip boundary condition at $y=\infty$. The trace at $\partial \HH$ of the Euler tangential velocity $u^E$, obeys Bernoulli's law
\begin{align*} 
\partial_t u^E + u^E \partial_x u^E + \partial_x p^E = 0.
\end{align*}
The Prandtl system \eqref{eq:P:1}--\eqref{eq:P:BC:2} is  supplemented with a compatible initial condition
\begin{align} 
u^P|_{t=0} = u_0^P. \label{eq:P:IC}
\end{align}
Our main result states that if the Euler data $(u^E,p^E)$ is constant, and if the initial datum $u_0^P$ of the Prandtl equations lies within $\eps$ of the error function $\erf(y/2)$ (in a suitable topology), then the Prandtl equations have a unique (classical in $x$ weak in $y$) solution on $[0,T_\eps]$, where $T_\eps \geq \exp(\eps^{-1}/\log(\eps^{-1}))$.
\begin{theorem}[\bf Almost global existence]
\label{thm:main}
Let the Euler data be given by $u^E = \kappa $ and $\partial_x p^E=0$. Define 
\begin{align*}
u_0(x,y) = u_0^P(x,y) - \kappa\; \erf\left( \frac y 2 \right)
\end{align*}
where $\erf$ is the Gauss error function.
There exists a sufficiently large universal constant $C_*>0$ and a sufficiently small universal constant $\eps_*>0$ such that the following holds. For any given $\eps \in (0,\eps_*]$, assume that there exists an analyticity radius $\tau_0>0$ such that 
\begin{align*} 
\frac{C_*}{\log \frac 1 \eps} \leq \tau_0^{3/2}  \leq \frac{1}{C_* \eps^3},
\end{align*}
and such that the function
\begin{align*} 
g_0(x,y) =  \partial_y u_0(x,y) + \frac{y}{2} u_0(x,y)
\end{align*}
obeys
\begin{align*} 
\|g_0\|_{X_{2 \tau_0,1/2}} := \sum_{m\geq 0} \| \exp(\tfrac{y^2}{8}) \partial_x^m g_0(x,y)\|_{L^2(\HH)} (2 \tau_0)^{m} \frac{\sqrt{m+1}}{m!} \leq \eps.
\end{align*}
Then there exists a unique solution
$u^P$ of the Prandtl boundary layer equations on $[0,T_\eps]$, where
\begin{align*} 
T_\eps \geq \exp\left(\frac{\eps^{-1}}{\log (\eps^{-1})}\right).
\end{align*}
The solution $u^P$ is real analytic in $x$, with analyticity radius larger than $\tau_0/2$, and lies in a weighted $H^2$ space with respect to $y$. We emphasize that $\eps$ and $\tau_0$ are independent of $\kappa$.
\end{theorem}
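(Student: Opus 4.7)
Since $u^E=\kappa$ is constant and $\partial_x p^E=0$, the natural background is the heat-evolved self-similar profile $U(t,y):=\kappa\,\erf(y/(2\sqrt{1+t}))$, which satisfies $\partial_t U=\partial_y^2 U$, $U|_{y=0}=0$, $U|_{y=\infty}=\kappa$, and $U|_{t=0}=\kappa\,\erf(y/2)$. Setting $u:=u^P-U$ and $v:=v^P=-\int_0^y \partial_x u\,dy'$, the Prandtl equation reduces to
\begin{align*}
\partial_t u - \partial_y^2 u + u\,\partial_x u + U\,\partial_x u + v\,\partial_y u + v\,\partial_y U = 0.
\end{align*}
The dangerous nonlocal term is $v\,\partial_y U$. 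The weight in the definition of $g$ is chosen to cancel it: observing that $g=e^{-y^2/(4(1+t))}\partial_y\bigl(e^{y^2/(4(1+t))}u\bigr)$, I would derive the equation for $g$ (by applying the operator $\partial_y + y/(2(1+t))$ to the equation for $u$) and verify that the commutator with the dissipation produces cancellations that eliminate the $v\,\partial_y U$ contribution, leaving only locally Gaussian-weighted quadratic errors. The map $g\mapsto u$ is inverted explicitly by $u(t,y)=e^{-y^2/(4(1+t))}\int_0^y e^{z^2/(4(1+t))}g(t,z)\,dz$, which, combined with the Gaussian weight $e^{y^2/8}$ in the norm, allows one to bound $u$ and $v$ pointwise in $y$ by the $X_{\tau,1/2}$ norm of $g$.

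\textbf{Analytic-Gevrey framework.} I would set up a family of norms $\|\cdot\|_{X_{\tau(t),1/2}}$ with a time-dependent radius $\tau(t)\leq 2\tau_0$, nonincreasing. A key observation is that the Gaussian weight $e^{y^2/8}$ in $X_{\tau,1/2}$ corresponds, upon squaring, to the $L^2$-weight $e^{y^2/4}$; integrating the dissipation $-\partial_y^2 g$ against $e^{y^2/4}\partial_x^m g$ produces the coercive contribution
\begin{align*}
\int e^{y^2/4}(\partial_x^m\partial_y g)^2\,dxdy \;+\; \tfrac14\int(1+\tfrac{y^2}{2})e^{y^2/4}(\partial_x^m g)^2\,dxdy,
\end{align*}
which controls many of the quadratic error terms born out of the coefficient $y/(2(1+t))$. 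The analyticity-loss mechanism enters via a companion norm $\|g\|_{Y_\tau}$ obtained by inserting an extra factor $\sqrt{m}$ in the sum; it appears on the left-hand side multiplied by $(-\dot\tau)$.

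\textbf{Energy estimate and bootstrap.} Differentiating the $g$-equation $m$ times in $x$, pairing with $e^{y^2/4}\partial_x^m g$, summing against the weights $(2\tau)^m\sqrt{m+1}/m!$, and estimating the nonlinearities via a tame Leibniz rule compatible with the Gevrey factor $\sqrt{m+1}/m!$, I expect a differential inequality of the schematic form
\begin{align*}
\frac{d}{dt}\|g\|_{X_{\tau,1/2}} + (-\dot\tau)\|g\|_{Y_{\tau,1/2}} + \text{dissipation} \;\lesssim\; \|g\|_{X_{\tau,1/2}}\,\|g\|_{Y_{\tau,1/2}}.
\end{align*}
Choosing $-\dot\tau(t)=C_1\|g(t)\|_{X_{\tau,1/2}}$ absorbs the nonlinear term, yielding $\|g(t)\|_{X_{\tau,1/2}}\lesssim\eps$ uniformly. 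The solution persists as long as $\tau(t)\geq\tau_0/2$, i.e.\ as long as $\int_0^T\|g(s)\|_{X_{\tau(s),1/2}}\,ds\lesssim \tau_0$. Crude integration gives the linear lifespan $T\gtrsim\tau_0/\eps$; the almost-global improvement will come from exploiting the sublinear Gaussian/Gevrey gain in the dissipation and the strict Gevrey-$1$ character of the weight $\sqrt{m+1}/m!$ (stronger than analytic), which produces a logarithmic smallness factor $\sim 1/\log(\eps^{-1})$ in the nonlinearity bound. Balancing against the admissible window $C_*/\log(\eps^{-1})\leq\tau_0^{3/2}\leq 1/(C_*\eps^3)$ for $\tau_0$ then gives $T_\eps\geq\exp(\eps^{-1}/\log\eps^{-1})$.

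\textbf{Main obstacle.} The crux is making the nonlinear estimate tight enough to produce the $\log(\eps^{-1})$ gain rather than a polynomial one. Three distinct difficulties must be handled simultaneously: (i) controlling the nonlocal convection $v\,\partial_y u$, where $v$ loses a $y$-primitive's worth of regularity, by pairing it with the Gaussian dissipation and with the $u\leftrightarrow g$ inversion formula; (ii) proving a tame Leibniz rule for the norm $X_{\tau,1/2}$ in which the product of two analytic functions is controlled by the $X_{\tau}$-norm times the $Y_\tau$-norm, so that exactly one factor of the derivative-gain norm appears; (iii) closing the bookkeeping on constants so that the smallness of the Gaussian weight, the Gevrey factor, and the choice of $\tau_0$ in the stated window conspire to give the precise lifespan bound. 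Step (iii), together with the fine choice of weights that make the good unknown $g$ remove the linear instability, is where the almost global — rather than merely long-time — character of the result is earned.
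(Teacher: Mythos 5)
Your setup is essentially the same as the paper's and is correct: the shifted profile $\kappa\,\erf(y/(2\sqrt{1+t}))$, the good unknown $g=\partial_y u+\frac{y}{2(1+t)}u$ (the paper's ``linearly-good unknown''), the explicit inversion $u=\uu(g)$, the Gaussian-weighted tangentially analytic norms, and the ODE $-\dot\tau\sim\|g\|$ that absorbs the derivative loss. But the mechanism you propose for the almost global lifespan is wrong, and it is precisely the part you flag as the crux.

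You attribute the logarithmic gain to ``the strict Gevrey-$1$ character of the weight $\sqrt{m+1}/m!$ (stronger than analytic), which produces a logarithmic smallness factor $\sim 1/\log(\eps^{-1})$ in the nonlinearity bound.'' This is not correct. The factor $\sqrt{m+1}/m!$ defines exactly the real-analytic class of radius $\tau$; the polynomial enhancement $\sqrt{m+1}$ (inherited from~\cite{OliverTiti01,KukavicaVicol13a}) is a device that lets the $\ell^1$-in-$m$ norms encode a full one-derivative gain, not a genuine Gevrey improvement. It produces no $\log(1/\eps)$ smallness in the nonlinear estimate. The nonlinear product bounds in the paper (Lemma~\ref{lem:analytic}) are the standard tame analytic product estimates and would, by themselves, give only the $T\gtrsim\tau_0/\eps$ lifespan you call ``crude.''

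The actual source of the improvement is time decay of $\|g\|_{X}$, obtained by working with the weight $\theta_\alpha=\exp(\alpha y^2/(4\langle t\rangle))$ at a subcritical exponent $\alpha=(1-\delta)/2<1/2$ rather than $\alpha=1/2$. The $z^2$-damping coefficient in the energy identity~\eqref{eq:g:m} is $\alpha(1-2\alpha)/(4\langle t\rangle)$, which is \emph{positive} only for $\alpha<1/2$ and vanishes at $\alpha=1/2$; for this reason your computation, done with $e^{y^2/4}$ (i.e. $\alpha=1/2$), loses the critical damping. (Also, your displayed formula for the contribution of $-\partial_y^2 g$ has the wrong sign: integrating $-\partial_y^2 g$ against $e^{y^2/4} g$ produces $\int e^{y^2/4}(\partial_y g)^2 - \tfrac14\int(1+\tfrac{y^2}{2})e^{y^2/4}g^2$, not $+$; the positive $z^2$-term only appears after combining with $\partial_t\theta_\alpha^2$ and the $\langle t\rangle^{-1}g$ term, and the net coefficient degenerates as $\alpha\to 1/2$.) The Poincar\'e inequality with Gaussian weight (Lemma~\ref{lem:Carleman}) is then used to boost the damping coefficient to $(5/4-\delta)/\langle t\rangle$, yielding the sharp decay $\|g(t)\|_{X_{\tau(t),\alpha}}\leq\eps\langle t\rangle^{-5/4+\delta}$. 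Because of this decay, the dissipative budget gives $\int_0^t\|g\|_{B_{\tau,\alpha}}\,ds\lesssim(\eps/\delta)\langle t\rangle^\delta$, and the ODE for $\tau^{3/2}$ then shows $\tau(t)^{3/2}\geq\tau_0^{3/2}-C\,(\eps/\delta)\langle t\rangle^\delta$. The almost-global lifespan comes from choosing $\delta=\eps\log(1/\eps)$: then $\eps/\delta=1/\log(1/\eps)$, and $\langle t\rangle^\delta$ exceeds $\tau_0^{3/2}\log(1/\eps)$ only at $t\approx\exp(1/(\eps\log(1/\eps)))$. The hypothesis $C_*/\log\frac1\eps\leq\tau_0^{3/2}$ is exactly what is needed for this balance. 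So the logarithmic gain lives in the choice of $\alpha$ (i.e. of $\delta$), in the resulting time decay of $\|g\|_X$, and in the interplay with the radius ODE — not in the nonlinear Leibniz estimate or in any Gevrey effect. Without this time-decay mechanism, your outline cannot close.
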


The precise function spaces, in which the solution $u^P$ lies, are given in Theorem~\ref{thm:rigorous} below. The condition relating $\eps$ and $\tau_0$ stated above roughly speaking says that we think of $0 < \eps \ll 1$, and of $\tau_0 = \OO(1)$. The stated condition is the sharp version of this heuristic.

\begin{remark}[\bf Initial vorticity may change sign]
\label{rem:monotone}
We note that the initial datum $u_0^P$ is not necessarily monotonic in $y$, i.e. we do not necessarily have $\omega_0^P := \partial_y u_0^P \geq 0$ or $\leq 0$ on $\HH$.
Thus, the initial data in Theorem~\ref{thm:main} need not fit in the Oleinik~\cite{Oleinik66} sign-definite vorticity setting. To see this, one may for example consider $\kappa = \eps >0$ sufficiently small and $\tau_0 = 1/4$. We then let
\[
u_0^P(x,y) = \eps \left( \exp(-x^2)  \eta(y) + \erf(y/2) \right),
\] 
with $\eta(y)$ such that $\eta(0)=0$ and $\exp(y^2/4) \eta(y) \in L^\infty_y$. Then 
 \[
 \partial_y u_0^P(0,y) = \eps\left( \eta'(y) + \exp(-y^2/4)/\sqrt{\pi}\right) 
\]
 can be designed so that 
\[
\partial_y u_0^P (0,0) > 0 \qquad \mbox{and} \qquad \partial_y u_0^P (0,1) < 0.
\]
This indeed shows that the initial profiles considered in Theorem~\ref{thm:main} need not be monotonic in $y$. 
\end{remark}

\subsection{The local well-posedness of the Prandtl equations}

Before discussing the proof of our main result (cf.~Subsection~\ref{sec:proof} below), we present the history of the problem. The Prandtl equations arise from matched asymptotic expansions~\cite{Prandtl1904} meant to describe the boundary behavior of solutions to the Navier-Stokes equations with Dirichlet boundary conditions
\begin{alignat}{2}
&\partial_tu^{(\nu)} - \nu\Delta u^{(\nu)} + u^{(\nu)}\cdot\nabla u^{(\nu)} + \nabla p^{(\nu)}=0, \qquad \nabla \cdot u^{(\nu)}=0,
& &\qquad \mbox{in } \Omega, \label{eq:NSE} \\
&u^{(\nu)}=0,
& &\qquad \mbox{on }\partial\Omega
\notag
\end{alignat}
in the vanishing viscosity limit $\nu \to 0$. Here $\Omega \subset \RR^2$ is a smooth domain.
Formally, as $\nu \to 0$ the Navier-Stokes equations reduce to the Euler equations, for which the slip boundary condition $u^E \cdot n=0$ holds on $\partial\Omega$. Due to this mismatch of boundary conditions, uniform in $\nu$ bounds for $\nabla u^{(\nu)}$ in e.g. the $L^1(\Omega)$ norm, on an $\OO(1)$ time interval, remain an outstanding mathematical challenge.

One of the fundamental questions which arise is to either prove that the Prandtl asymptotic expansion 
\begin{align}
(u^{(\nu)},v^{(\nu)})(t,x,y) = (u^{E} ,v^{E})(t,x,y) + (u^{P} , \sqrt{\nu} v^{P})(t,x,y/\sqrt{\nu})  + o(\sqrt{\nu}),
\label{eq:expansion}
\end{align}
can be justified rigorously~\cite{SammartinoCaflisch98a,Maekawa14,GuoNguyen14}, or to show that it fails~\cite{Grenier00,GerardVaretDormy10,GuoNguyen11,GerardVaretNguyen12,GrenierGuoNguyen14,GrenierGuoNguyen14b,GrenierGuoNguyen14c}. Naturally, the answer  is expected to depend on the topology in which \eqref{eq:expansion} is considered, and this is intimately related to the question of well-posedness of the Prandtl system. By now, the local in time well-posedness of \eqref{eq:P:1}--\eqref{eq:P:IC} has been considered by many authors, see e.g.~\cite{Oleinik66, EEngquist97,SammartinoCaflisch98a,CaflischSammartino00,Grenier00b,CannoneLombardoSammartino01,HongHunter03,LombardoCannoneSammartino03, GarganoSammartinoSciacca09,MasmoudiWong12a,KukavicaVicol13a,GerardVaretMasmoudi13,Maekawa14,AlexandreWangXuYang14,KukavicaMasmoudiVicolWong14,LiuWangYang14,WangXieYang14,LiWuXu15} and references therein. However, the question of whether the inviscid limit $u^{(\nu)} \to u^E$ holds whenever the Prandtl equations are locally well-posed, and are thus stable in some sense, remains open. See~\cite{ConstantinKukavicaVicol14} for partial progress in this direction.

In~\cite{Oleinik66}, Oleinik proved the existence of solutions for the unsteady Prandtl system provided the prescribed horizontal velocities are positive and monotonic, i.e. $u^{E}>0$ and $\omega^P = \partial_y u^{P}>0$. From the physical point of view, the monotonicity assumption has stabilizing effect since it prevents boundary layer separation. The main ingredient of the proof is the Crocco transform which uses $u^P$ as an independent variable instead of $y$ and $\omega^P$ as an unknown instead of $u^P$. 
More recently in~\cite{MasmoudiWong12a}, Masmoudi and Wong use solely energy methods and a new change of variables to prove the local in time existence and uniqueness in weighted Sobolev spaces, under the Oleinik's monotonicity assumption. The main idea of~\cite{MasmoudiWong12a} is to use a Sobolev energy in terms of the good unknown $g^P = \omega^P - u^P \partial_y \log( \omega^P)$, which may be done if $\omega^P > 0$. The equation obeyed by the top derivative in $x$ of $g^P$ is better behaved than that of the top derivative of either $u^P$ or $\omega^P$.  Although cf.~Remark~\ref{rem:monotone} this change of variables is unavailable to us, the idea of a good unknown inspired by \cite{MasmoudiWong12a}, plays a fundamental role in our proof. Recently, in~\cite{KukavicaMasmoudiVicolWong14}, the local existence and uniqueness for the Prandtl system was proven for initial data with multiple monotonicity regions, as long as on the complement of these regions the initial datum is tangentially real-analytic.

For real-analytic initial datum, Sammartino and Caflisch \cite{SammartinoCaflisch98a,CaflischSammartino00} established the local well-posedness by using the abstract Cauchy-Kowalewski theorem. For initial datum analytic only with respect to the tangential variable the local well-posedness was obtained in~\cite{LombardoCannoneSammartino03}. 
In~\cite{KukavicaVicol13a}, the authors gave an energy-based proof of this fact, and considered initial data with polynomial rather than exponential matching at the top of the boundary layer $u^{P}(t,x,y)-u^{E}(t,x) \to 0$ as $y\to\infty$. The tangentially-analytic norms introduced in \cite{KukavicaVicol13a} encode at $L^2$ level a full one-derivative gain from the decaying analyticity radius. These norms play an essential role in our proof. 

More recently, for data in the Gevrey class-$7/4$ in the tangential variable,
which has a single curve of non-degenerate critical points (i.e. $\omega_0^P=0$ iff $y=a_0(x)>0$ with
$\partial_y \omega_0^P(x,a_0(x))>0$ for all $x$),
the local well-posedness of the Prandtl equations was proven by Gerard-Varet and Masmoudi in~\cite{GerardVaretMasmoudi13}. Note that this Gevrey-exponent is not in contradiction with the ill-posedness in Sobloev spaces established by Gerard-Varet and Dormy in~\cite{GerardVaretDormy10} 
for the linearized Prandtl equation around a non-monotonic shear flow. Here the authors show  that some perturbations with high tangential frequency, $k \gg 1$, grow in time as $e^{\sqrt{k}t}$. At the nonlinear level this strong ill-posedness was obtained by Gerard-Varet and Nguyen~\cite{GerardVaretNguyen12}.

\subsection{The long time behavior of the Prandtl equations}
As pointed out by Grenier, Guo, and Nguyen \cite{GrenierGuoNguyen14c,GrenierGuoNguyen14,GrenierGuoNguyen14b} (see also~\cite{DrazinReid04}), in order to make progress towards proving or disproving the inviscid limit of the Navier-Stokes equations, a finer understanding of the Prandtl equations is required, and in particular one must understand its behavior on a longer time interval than the one which causes the instability used to prove ill-posedness. However, to the best of our knowledge the long-time existence of the Prandtl equations has only been considered in~\cite{Oleinik66}, \cite{XinZhang04}, and~\cite{ZhangZhang14}.

Oleinik shows in \cite{Oleinik66} that global regular solutions exist,  when the horizontal variable $x$ belongs to a finite interval $[0,L]$, with $L$ sufficiently small. Xin and Zhang prove in~\cite{XinZhang04} that if the pressure gradient has a favorable sign, that is $\partial_x p^E(t,x)\le 0$ for all $t>0$ and $x\in \RR$,  and the initial condition $u_0^P$ of Prandtl is monotone in $y$, the solutions are global. However these are weak solutions in Crocco variables, which are not known to be unique or to be regular. The global existence of smooth solutions in the monotonic case remains to date open. In the case of large datum, the assumption of a monotone initial velocity is essential. Indeed, E and Engquist~\cite{EEngquist97} take $u^E = \partial_x p^E = 0$ and construct an initial datum $u_0^P$ which is real-analytic in the tangential variable, but for which $\omega_0^P$ is not sign definite, and prove that the resulting solution of Prandtl (known to exist for short time in view of~\cite{LombardoCannoneSammartino03,KukavicaVicol13a}) blows up in finite time. We emphasize that for this blowup to occur, the initial datum must be at least $\OO(1)$: indeed, for initial datum that is sufficiently small, the conditions of Lemma 2.1 in \cite{EEngquist97} fail, and thus the proof does not apply.

In fact, for initial datum that is tangentially real-analytic and small, in a recent paper Zhang and Zhang~\cite{ZhangZhang14} prove that the system \eqref{eq:P:1}--\eqref{eq:P:IC} has a unique solution on a time interval that is much longer than the one guaranteed by the local existence theory. More precisely, for $u^E = \eps$ and $\partial_x p^E = 0$ it is proven in~\cite{ZhangZhang14} that if $u_0^P = \OO(\eps)$ in a norm that encodes Gaussian decay as $y \to \infty$ and tangential analyticity in $x$, and if  $\eps \ll 1$, the time of existence of the resulting solution is at least $\OO(\eps^{-4/3})$. The elegant proof relies on anisotropic Littlewood-Paley energy estimates in tangentially analytic norms, inspired by the ones previously used by Chemin, Gallagher, and Paicu~\cite{CheminGallagherPaicu11} to treat the Navier-Stokes equations with datum highly oscillating in one direction (see also~\cite{PaicuZhang14} and references therein).

\subsection{Almost global existence for the Prandtl equations}
\label{sec:proof}
In~\cite[Remark 1.1]{ZhangZhang14}, the authors raise the following question: ``whether the lifespan obtained in Theorem 1.1 is sharp is a very interesting question''. That is, do the solutions of the Prandtl equations with size $\eps$ initial datum live for a time interval longer than $\OO(\eps^{-4/3})$? In this paper we give a positive answer to this question, and prove (cf.~Theorem~\ref{thm:main} or Theorem~\ref{thm:rigorous}) that in 2D we have almost global existence (in the sense of~\cite{Klainerman83}). That is, the solution lives up to time $\OO(\exp(\eps^{-1}/\log\eps^{-1}))$. Our initial datum $u_0^P$ consists of a stable $\OO(\kappa)$ boundary layer lift profile, and an $\OO(\eps)$ possibly unstable, but tangentially real-analytic profile. In particular, the total initial vorticity is not necessarily positive (cf.~Remark~\eqref{rem:monotone}). Whether solutions arising from sufficiently small initial datum are in fact global in time remains open, and this may depend on whether $\kappa \lesssim \eps$ or $\eps \ll \kappa$.

The proof of Theorem~\ref{thm:main} proceeds in several steps. In order to homogenize the boundary condition at $y=\infty$, we write  $u^P$ as a perturbation $u$ of a stable shear profile $\kappa \phi(t,y)$, with $\partial_y\phi(t,y)>0$. In order to capture the maximal time decay from the heat equation, and to explore certain cancellations in the nonlinear terms of Prandtl, we choose the boundary lift $\phi(t,y)$ to be the Gauss error function $\erf(y/\sqrt{4(t+1)})$. The equation obeyed by $u$ (cf.~\eqref{eq:New:P:velocity}) contains the usual terms in the Prandtl equations, but also terms that are linear and quadratic in $\phi$. The lift $\phi$ is chosen so that the quadratic terms in $\phi$ vanish, and we are left to understand the linear ones in $\phi$. We note that until here a similar path is followed in~\cite{ZhangZhang14}, and that energy estimates for the ensuing linear problem leads to the maximal time of existence $\OO(\eps^{-4/3})$. 

The main enemy to obtaining a longer time of existence is the term $\kappa v \partial_y \phi$ in the velocity equation \eqref{eq:New:P:velocity} (respectively $\kappa v \partial_y^2 \phi$ in the vorticity equation \eqref{eq:New:P:vorticity}). As $v=-\partial_y^{-1}\partial_x u$, this term loses one tangential derivative, and is merely linear in $u$, so that it is not small with respect to $\eps$. 

The main idea of our paper is to introduce a new {\em linearly-good unknown} $g = \omega - u \partial_y ( \log \partial_y \phi)$, cf.~\eqref{eq:g} below. This change of variable is directly motivated by the one in~\cite{MasmoudiWong12b} for the case $\omega^P >0$. The upshot is that $g$ obeys an equation in which the bad terms $\kappa v \partial_y \phi$ and $\kappa v \partial_y^2 \phi$ cancel out, cf.~\eqref{eq:g:1} below. The solution of this new equation may be shown to be globally well-posed. Note here that we may recover $u=\uu(g)$ and $v=\vv(g)$ via linear operators $\uu$ and $\vv$ that are nonlocal in $y$, cf.~\eqref{eq:def:uu}--\eqref{eq:def:vv}. Thus $g$ is the only prognostic variable in the problem, and the system \eqref{eq:g:1}--\eqref{eq:g:3} is equivalent to \eqref{eq:P:1}--\eqref{eq:P:BC:2}. 

In order to take advantage of the time decay in the heat equation, it is natural to replace the $y$ variable with the heat self-similar variable $z = z(t,y) = y/\sqrt{t+1}$, and to use $L^2$ norms with gaussian weights in the normal direction. The gaussian weights are useful when bounding $u$ and $v$ in terms of $g$, cf.~Lemma~\ref{lem:u:v:g}. Moreover, the gaussian weights allow us to deal with another technical obstacle; namely, that in unbounded domains the Poincar\'{e} inequality does not hold. However, with the gaussian weights defined in \eqref{eq:theta:def}, we may use a special case of the Treves inequality (cf.~Lemma~\eqref{lem:Carleman}) as a replacement of the Poincar\'e inequality. The need for a Poincar\'e-type inequality in the $y$ variable comes from the desire to work with $L^2$ norms, and still capture the full one-derivative gain from the decay of the analyticity radius. As was shown in~\cite{OliverTiti01,KukavicaVicol11a,KukavicaVicol13a} this may be achieved by designing norms based on $\ell^1$ rather than $\ell^2$ sums over the derivatives. The one derivative gain inherent in these $\ell^1$-based norms allows for direct energy estimates.

The main ingredient of the proof is the a priori estimate \eqref{eq:X:1} below. The idea is to solve the PDE \eqref{eq:g:1}--\eqref{eq:g:3} for $g$ simultaneously with a nonlinear ODE~\eqref{eq:tau:1} for the tangential analyticity radius $\tau$. The fact that the analyticity radius $\tau$ does not decrease to less than $\tau(0)/2$ on the time interval considered follows from the time integrability of the dissipative terms present on the left side of \eqref{eq:X:1} (see also~\cite{PaicuVicol11}). 

The proof of Theorem~\ref{thm:main} is concluded once we establish the uniqueness of solutions in this class, and show that there exists at least one solution to the coupled system for $g$ and $\tau$. While uniqueness follows from the available a priori estimates, the existence of solutions introduces a number of additional difficulties. One of these is proving existence of solutions to \eqref{eq:tau:1}. This is a first order ODE in $\tau$, for which the nonlinear forcing term is well defined (i.e., the infinite sum converges), only if the solution $g$ already is known to have analyticity radius $\tau$. To overcome this difficulty, we consider a dissipative approximation of \eqref{eq:g:1}--\eqref{eq:g:3} and for  $\nu>0$ add a $-\nu \partial_x^2 g$ term on the left side of \eqref{eq:g:1}. We prove that this regularized equation has solutions $g^{(\nu)}$ in a fixed order Sobolev space, and a posteriori show that these solutions are tangentially real-analytic, with radii $\tau^{(\nu)}$ that obey an ODE similar to \eqref{eq:tau:1}. Here we essentially use that the initial datum $g_0$ is assumed to have tangential analyticity radius $2\tau_0$, while the solution is only shown to have a radius that lies between $\tau_0/2$ and $\tau_0$. We prove that these radii $\tau^{(\nu)}$ are uniformly equicontinuous in $\nu$ (in fact uniformly H\"older $1/2$) so that they converge along a subsequence on the compact time interval $[0,T_\eps]$. To conclude the proof, we show that along this subsequence the $g^{(\nu)}$ are a Cauchy sequence when measured in the tangentially analytic norms, and that the limiting solution $g$ and limiting radius $\tau$ obey \eqref{eq:tau:1}.

\subsection*{Organization of the paper} The detailed reformulation of the Prandtl system is given in Section~\ref{sec:new}. Here we also define the spaces in which the solutions lives, and reformulate Theorem~\ref{thm:main} in these terms. The a priori estimates are given in Section~\ref{sec:apriori}, the uniqueness of solutions is proven in Section~\ref{sec:uniqueness}, and the details concerning the existence of solutions are given in Section~\ref{sec:existence}.

\section{The linearly-good unknown, function spaces, and the main result}
\label{sec:new}

Denote by
\begin{align} 
z = z(t,y) = \frac{\yy}{\tt^{1/2}}, \qquad \mbox{where} \qquad \tt = t+1
\label{eq:z:def}
\end{align}
the heat self-similar variable.
We consider the lift 
\[
\kappa \phi = \kappa \phi(t,y)
\]
of the boundary conditions \eqref{eq:P:BC:1}--\eqref{eq:P:BC:2}, where 
\begin{align} 
\phi(t,y) =   \Phi( z(t,y)) 
\label{eq:phi:def:0}
\end{align}
and the function $\Phi(z)$ obeys
\begin{align} 
\Phi(0) = 0, \qquad \lim_{z\to \infty} \Phi(z) = 1,  \qquad \Phi'(z) > 0.
\label{eq:phi:def:1}
\end{align}
We make a precise choice of $\Phi$ in \eqref{eq:phi:def} below. We already note that by design $\partial_y \phi(t,y)>0$ for $y > 0$, i.e., the vorticity of the shear flow $\phi$ is positive, and thus stable (in the sense of~\cite{Oleinik66}).

We write the solution of \eqref{eq:P:1}--\eqref{eq:P:BC:2} as a perturbation $u(t,x,y)$ of the lift $\kappa \phi(t,y)$ via
\begin{align*} 
u^P(t,x,y) = \kappa \phi(t,y) + u(t,x,y)
\end{align*}
so that the perturbation $u$ obeys the homogenous boundary conditions
\begin{align} 
u|_{y=0} = u|_{y= \infty} = 0 \label{eq:New:P:BC:1},
\end{align}
and satisfies the equation
\begin{align} 
&\partial_t u - \partial_y^2 u +  \kappa \phi  \partial_x u + \kappa v \partial_y \phi  + u \partial_x u+ v \partial_y u
= - \kappa (\partial_t \phi - \partial_y^2\phi),
\label{eq:New:P:velocity}
\end{align}
where $v$ is computed from $u$ as
\begin{align}
v(t,x,y) = - \int_0^y \partial_x u(t,x,\bar y) d\bar y.
\label{eq:v:u}
\end{align}
Using \eqref{eq:New:P:velocity}, we obtain the equation for the perturbed vorticity $\omega= \partial_y u$,
\begin{align} 
\partial_t \omega - \partial_y^2 \omega + \kappa \phi \partial_x \omega + \kappa v \partial_y^2 \phi + u \partial_x \omega + v \partial_y \omega = - \kappa( \partial_t \partial_y \phi - \partial_y^3 \phi),
\label{eq:New:P:vorticity}
\end{align}
with the natural boundary condition
\begin{align} 
\partial_y \omega |_{y=0} = - \kappa \partial_y^2 \phi|_{y=0}. \label{eq:New:P:BC:2}
\end{align}
As we work in weighted spaces, at $y=\infty$ we impose the condition  
\begin{align}
\omega|_{y=\infty} = 0.
\label{eq:New:P:BC:3}
\end{align} 

\subsection{The linearly-good unknown}
As can already be seen in~\cite{ZhangZhang14}, the main obstruction for obtaining the global in time existence of solutions comes from the linear problem for the velocity  and  vorticity  
\begin{align} 
&\partial_t u - \partial_y^2 u +  \kappa \phi  \partial_x u  + \kappa v  \partial_y \phi  
= - \kappa (\partial_t \phi - \partial_y^2\phi) + \mbox{nonlinearity}, \label{eq:toy:u}\\
&\partial_t \omega  - \partial_y^2 \omega  + \kappa \phi \partial_x \omega  + \kappa v  \partial_y^2 \phi  = - \kappa( \partial_t \partial_y \phi - \partial_y^3 \phi) + \mbox{nonlinearity}.\label{eq:toy:w}
\end{align}
Inspired by \cite{MasmoudiWong12b} (see also~\cite{GerardVaretMasmoudi13,KukavicaMasmoudiVicolWong14}), we tackle this issue by considering the {\em linearly-good unknown} 
\begin{align} 
g(t,x,y) = \omega(t,x,y) - u(t,x,y) a(t,y)
\label{eq:g}
\end{align}
where
\begin{align} 
a(t,y) = \frac{\partial_y^2 \phi(t,y)}{\partial_y \phi(t,y)}.
\label{eq:a:def}
\end{align}
Note that one may solve the first order (in $y$) linear equation \eqref{eq:g} to compute $u$ from $g$ explicitly as 
\begin{align} 
u(t,x,y) 
= \partial_y \phi(t,y) \int_0^y g(t,x,\bar y) \frac{1}{\partial_y \phi(t,\bar y)} d \bar y  
= \Phi'(z(t,y)) \int_0^y g(t,x,\bar y) \frac{1}{\Phi'(z(t,\bar y))} d\bar y,
\label{eq:u:g}
\end{align}
where we have used the boundary condition of $u$ at $y=0$. 
Also, if $g$ decays sufficiently fast at infinity, this ensures the correct boundary conditions for $u$.
The formula \eqref{eq:u:g} is useful when performing weighted estimates for $u$ in terms of weighted norms of $g$. 

The evolution equation obeyed by prognostic variable $g$ is
\begin{align} 
\partial_t g - \partial_y^2 g + ( u + \kappa \phi) \partial_x g  + v \partial_y g - 2 \partial_y a g  + u L + v u \partial_y a = \kappa F, \label{eq:New:P:g}
\end{align}
where the diagnostic variables $u$ and $v$ may be computed from $g$ via \eqref{eq:u:g} and \eqref{eq:v:u}. The functions $F$ and $L$ are given by 
\begin{align*} 
F(t,y) &= a (\partial_t \phi - \partial_y^2 \phi) -   ({\partial_t \partial_y\phi} - \partial_y^3 \phi) \\
L(t,y) &= \partial_t a - \partial_y^2 a - 2 a \partial_y a,
\end{align*}
and $a$ is as defined in \eqref{eq:a:def}. 
Moreover, in view of \eqref{eq:New:P:BC:1}, \eqref{eq:New:P:BC:2}, and \eqref{eq:New:P:BC:3}, the linearly-good unknown obeys the boundary conditions
\begin{align} 
(\partial_y g+ag )|_{y=0} = \partial_y \omega|_{y=0} = - \kappa \partial_y^2 \phi|_{y=0},  \label{eq:g:BC:Zero}
\end{align}
and
\begin{align} 
 g|_{y=\infty} = 0, \label{eq:g:BC:Infty}
\end{align}
where the latter one comes from the convenience of vorticity that vanishes as $y \to \infty$.

\subsection{A Gaussian lift of the boundary conditions}
At this stage we make a choice for the boundary condition lift $\Phi$. Our choice is determined by trying to eliminate the forcing term $F$ on the right side of \eqref{eq:New:P:g}, and the linear term $uL$ on the left side of \eqref{eq:New:P:g}.
For this purpose, let $\Phi$ be defined via $\Phi(0) = 0$ and
\begin{align} 
\Phi'(z) =  \frac{1}{\sqrt{\pi}}  \exp\left( - \frac{z^2}{4} \right) 
\label{eq:phi:def}
\end{align}
where the normalization
ensures that $\Phi \to 1$ as $z \to \infty$. In the original variables, this means that
\begin{align*} 
\phi(t,y) = \frac{1}{\sqrt{\pi}} \int_0^{y/\sqrt{\tt}} \exp\left(  - \frac{z^2}{4} \right) dz = \erf\left( \frac{y}{\sqrt{4 \tt}} \right)
\end{align*}
where $\erf$ is the Gauss error function.
With this choice of $\Phi$, 
we immediately obtain
\begin{align*} 
a(t,y) &= - \frac{z}{2 \tt^{1/2}} = - \frac{y}{2 \tt} \\
F(z) &= L(z) = 0 
\end{align*}
and the boundary values
\begin{align*} 
\Phi''(0) = 0 \qquad \mbox{and} \qquad \Phi'(0) = 1/\sqrt{\pi} > 0.
\end{align*}
The evolution equation \eqref{eq:New:P:g} for the good unknown
\begin{align} 
g = \omega + \frac{y}{2\tt} u
\label{eq:g:def}
\end{align}
thus becomes 
\begin{align} 
&\partial_t g - \partial_y^2 g + (u + \kappa \phi) \partial_x g  + v \partial_y g + \frac{1}{\tt} g  - \frac{1}{2\tt} v u  = 0 \label{eq:g:1}\\
&\partial_y g|_{y=0} = g|_{y=\infty} = 0 \label{eq:g:2}\\
&g|_{t=0} = g_0 \label{eq:g:3}.
\end{align}
As noted before (cf.~\eqref{eq:u:g} and \eqref{eq:v:u}), $u$ and $v$ may be computed from $g$ explicitly
\begin{align}  
u(t,x,y) &= \uu(g)(t,x,y) := \exp\left(-\frac{y^2}{4 \tt} \right) \int_0^y g(t,x,\bar y) \exp \left(\frac{\bar y^2}{4 \tt} \right) d\bar y \label{eq:def:uu} \\
v(t,x,y) &= \vv(g)(t,x,y) := - \int_0^y \uu(\partial_x g)(t,x,\bar y) d\bar y ,\label{eq:def:vv} 
\end{align}
and thus solving \eqref{eq:g:1}--\eqref{eq:g:3} is equivalent to solving the Prandtl boundary layer equations \eqref{eq:P:1}--\eqref{eq:P:IC}. 

\subsection{Tangentially analytic functions with Gaussian normal weights}

Lastly, in view of \eqref{eq:u:g} and the choice \eqref{eq:phi:def} of $\Phi$, it is natural to use the Gaussian weight   defined by
\begin{align} 
\ww(t,y) = \exp \left( \frac{\alpha z(t,y)^2}{4} \right) = \exp \left( \frac{\alpha y^2}{4 \tt} \right)
\label{eq:theta:def}
\end{align}
for some 
\begin{align*} 
\alpha \in [1/4,1/2]
\end{align*}
to be chosen later ($\eps$-close to $1/2$).

In order to define the functional spaces in which the solution lies, motivated by~\cite{KukavicaVicol13a}, it is convenient to define
\begin{align*} 
M_m = \frac{\sqrt{m+1}}{m!},
\end{align*}
and introduce the Sobolev weighted semi-norms
\begin{align}
&X_m= X_m(g,\tau) = \|\ww \partial_x^m g\|_{L^2} \tau^m M_m, \label{eq:Xm} \\ 
&D_m = D_m(g,\tau) = \|\ww \partial_y \partial_x^m g\|_{L^2} \tau^m M_m = X_m(\partial_y g,\tau),  \\ 
&Z_m = Z_m(g,\tau) = \|z \ww \partial_x^m g\|_{L^2} \tau^m M_m = X_m(z g,\tau),   \\ 
&B_m = B_m(g,\tau) =  \langle t \rangle^{1/4} X_{m}(g,\tau)+ \langle t \rangle^{1/4} Z_{m}(g,\tau) + \langle t \rangle^{3/4} D_{m}(g,\tau)\\
&Y_m = Y_m(g,\tau) = \|\ww \partial_x^m g\|_{L^2} \tau^{m-1} m M_m. \label{eq:Ym}
\end{align}
As in \cite{KukavicaVicol13a}, we consider the following space of function that are real-analytic in $x$ and lie in a weighted $L^2$ space with respect to $y$
\begin{align*} 
X_{\tau,\alpha} = \{ g(t,x,y) \in L^2(\HH; \theta_\alpha dy dx) \colon \|g\|_{X_{\tau,\alpha}} < \infty \}
\end{align*}
where for $\tau >0$ and $\alpha$ as above we define
\begin{align} 
\|g\|_{X_{\tau,\alpha}} = \sum_{m\geq 0} X_m(g,\tau).
\label{eq:X:tau}
\end{align}
We also define the semi-norm
\begin{align}
\|g\|_{Y_{\tau,\alpha}} = \sum_{m\geq 1} Y_m(g,\tau)
\label{eq:Y:tau}
\end{align}
which encodes the one-derivative gain in the analytic estimates, when the summation in $m$ is considered in $\ell^1$ rather than in $\ell^2$, as is classical when using Fourier analysis. Note that for $\beta >1$, we have
\begin{align}
\|g\|_{Y_{\tau,\alpha}} \leq  \tau^{-1} \|g\|_{X_{\beta \tau,\alpha}}  \sup_{m\geq 1} \left( m \beta^{-m} \right)
\leq C_\beta \tau^{-1} \|g\|_{X_{\beta \tau,\alpha}}
\label{eq:Y:X:bound}.
\end{align}
In particular, $g \in X_{2 \tau,\alpha}$ implies that  $\|g\|_{Y_{\tau,\alpha}} \leq  \tau^{-1} \|g\|_{X_{2\tau,\alpha}}$.
The gain of a $y$ derivative shall be encoded in the dissipative semi-norm
\begin{align*} 
\|g\|_{D_{\tau,\alpha}} = \sum_{m\geq 0} D_m(g,\tau) = \| \partial_y g \|_{X_{\tau,\alpha}},
\end{align*}
while the damping in the heat self-similar variable $z$ is measured via
\begin{align*} 
\|g\|_{Z_{\tau,\alpha}} = \sum_{m\geq 0} Z_m(g,\tau) = \| z g \|_{X_{\tau,\alpha}}.
\end{align*}
For compactness of notation, for a function $g$ such that $g, z g, \partial_y g \in X_{\tau,\alpha}$
we use the time-weighted norm
\begin{align}
\|g\|_{B_{\tau,\alpha}} = \sum_{m\geq 0} B_m(g,\tau) = \tt^{1/4} \|g\|_{X_{\tau,\alpha}} + \tt^{1/4} \|g\|_{Z_{\tau,\alpha}} + \tt^{3/4} \|g\|_{D_{\tau,\alpha}} 
\label{eq:B:tau}
\end{align}
where as before $\tau > 0$ and $\alpha \in [1/4,1/2]$.
Lastly, in order to obtain time regularity for the radius of analyticity $\tau(t)$, it will be convenient to use a hybrid of the $\ell^2$ and $\ell^1$ tangentially analytic norms, given by
\begin{alignat}{2}
&\|g\|_{\tilde D_{\tau,\alpha}}:= \sum_{m\geq 0} \tilde D_m, \quad
& &\tilde D_m = \tilde D_m(g,\tau)  = \frac{ D_{m}(g,\tau)^2}{X_m(g,\tau)}, \label{eq:tilde:D:tau}\\ 
&\|g\|_{\tilde Z_{\tau,\alpha}}:= \sum_{m\geq 0} \tilde Z_m,  \quad
& & \tilde Z_m = \tilde Z_m (g,\tau) = \frac{Z_{m}(g,\tau)^2}{X_m(g,\tau)},\\ 
&\|g\|_{\tilde B_{\tau,\alpha}}:= \sum_{m\geq 0} \tilde B_m \quad 
& & \tilde B_m = \tilde B_{m}(g,\tau)= \langle t \rangle^{1/4} X_{m}(g,\tau)+ \langle t \rangle^{1/4}  \tilde Z_{m}(g,\tau) + \langle t \rangle^{5/4} \tilde D_m(g,\tau).\label{eq:tilde:B:tau}
\end{alignat}
We note that the bound
\begin{align} 
\|g\|_{B_{\tau,\alpha}} \leq 2 \tt^{1/8} \|g\|_{X_{\tau,\alpha}}^{1/2}  \|g\|_{\tilde B_{\tau,\alpha}}^{1/2}
\label{eq:B:tilde:B}
\end{align}
is an immediate consequence of the Cauchy-Schwartz inequality.

\subsection{The main result} Having introduced the functional setting of this paper we restate Theorem~\ref{thm:main} in these terms. First, we give a definition of solutions to the reformulated Prandtl equations~\eqref{eq:g:1}--\eqref{eq:g:3}. 
\begin{definition}[\bf Classical in $x$ weak in $y$ solutions]
\label{def:sol}
For $\beta >0$ define $\HCal_{2,1,\beta}$ to be the closure under the norm 
\[
\| h \|_{\HCal_{2,1,\beta}}^2 = \sum_{m=0}^{2} \sum_{j=0}^1 \int_{\HH} |\partial_x^m \partial_y^j h(x,y)|^2 \exp\left(\frac{\beta y^2}{2}\right) dy dx
\]
of the set of functions 
\[
{\mathcal D} = \{ h(x,y) \in C_0^\infty(\RR \times [0,\infty)) \colon \partial_y h|_{y=0} = 0 \}.
\]
Let $\alpha \in [1/4,1/2]$, and $\ww(t,y)$ be defined by \eqref{eq:theta:def}. For $T>0$ we say that a function
\begin{align*} 
g  \in L^\infty([0,T); \HCal_{2,1,\alpha/\tt}) 
\end{align*}
is a {\em classical in $x$ weak in $y$ solution} of the initial value problem for the Prandtl equations \eqref{eq:g:1}--\eqref{eq:g:3} on $[0,T)$, if \eqref{eq:g:1} holds when tested against elements of $C_0^\infty([0,T) \times \RR \times [0,\infty))$.
\end{definition}

\begin{theorem}[\bf Main result]
\label{thm:rigorous}
Assume the trace of the Euler flow is given by $u^E = \kappa$ and $\partial_x p^E = 0$.
For $t\geq 0$, define
\begin{align*} 
u(t,x,y) = u^P(t,x,y) - \kappa\, \erf\left( \frac{y}{\sqrt{4 \tt }}  \right)
\end{align*}
and  let 
\[
g(t,x,y) = \partial_y u (t,x,y) + \frac{y}{2\tt} u(t,x,y).
\]
There exists a sufficiently large universal constant $C_*>0$ and a sufficiently small universal constant $\eps_*>0$ such that the following holds. Assume that there exists an analyticity radius $\tau_0>0$ and an $\eps \in (0,\eps_*]$ 
such that 
\begin{align} 
\frac{C_*}{\log \frac 1 \eps} \leq \tau_0^{3/2} \leq \frac{1}{C_* \eps^3},
\label{eq:eps:tau:0}
\end{align}
and such that the initial condition $g_0=g(0,\cdot,\cdot)$ is small, in the sense that 
\begin{align} 
\|g_0\|_{X_{2 \tau_0,1/2}} \leq \eps.
\label{eq:cond:1}
\end{align}
Then there exists a unique classical in $x$ weak in $y$ solution 
$g$ of the Prandtl boundary layer equations \eqref{eq:g:1}--\eqref{eq:g:3} on $[0,T_\eps]$ which is tangentially real-analytic, and the maximal time of existence obeys
\begin{align*} 
T_\eps \geq \exp\left(\frac{\eps^{-1}}{\log (\eps^{-1})}\right).
\end{align*}
Moreover, letting $\delta = \eps \log \frac 1\eps$ and  $\alpha = \frac{1 - \delta}{2}$, the tangential analyticity radius $\tau(t)$ of the solution $g(t)$ satisfies 
\begin{align}
\tau(t) \geq \left( \tau_0^{3/2} - \frac{C_* \tt^\delta}{2 \log \frac 1 \eps} \right)^{2/3} \geq \frac{\tau_0}{2}
\label{eq:g:bnd:0}
\end{align}
and the solution $g(t)$ obeys the bounds
\begin{align} 
& \|g(t)\|_{X_{\tau(t),\alpha}} \leq \eps \tt^{- 5/4 + \delta}  \label{eq:g:bnd:1} \\
& \int_0^t \left( \|g(s)\|_{B_{\tau(s),\alpha}} + \|g(s)\|_{\tilde B_{\tau(s),\alpha}} \right) ds \leq \frac{C_* \tt^\delta}{\log \frac 1\eps} \label{eq:g:bnd:2} \\
& \int_0^t \frac{\ss^{5/4-\delta}}{\tau(s)^{1/2}} \|g(s)\|_{Y_{\tau(s),\alpha}} \|g(s)\|_{B_{\tau(s),\alpha}} ds \leq C_* \eps \label{eq:g:bnd:3}
\end{align}
for all $t \in [0,T_\eps]$.
\end{theorem}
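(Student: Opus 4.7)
The plan is to establish \eqref{eq:g:bnd:0}--\eqref{eq:g:bnd:3} by a continuity argument for the \emph{coupled} problem: the PDE \eqref{eq:g:1}--\eqref{eq:g:3} for $g$ together with a first order ODE for the tangential analyticity radius $\tau(t)$ of the schematic form
\begin{align*}
-\dot \tau(t)\,\tau(t)^{1/2} = \frac{K}{\tt^{5/4-\delta}}\,\|g(t)\|_{Y_{\tau(t),\alpha}}, \qquad \tau(0) = 2\tau_0,
\end{align*}
with $\alpha = (1-\delta)/2$, $\delta = \eps \log(1/\eps)$, and a large universal constant $K$. The reason for this particular ODE is that the only term in \eqref{eq:g:1} that loses a tangential derivative is $v\partial_y g = \vv(g)\,\partial_y g$; after summing $X_m$ in $m$, this loss produces exactly $\|g\|_{Y_{\tau,\alpha}}$, which can only be absorbed on the left of the energy estimate by the contribution $-\dot\tau\,\partial_\tau \sum_m X_m = -\dot\tau\,\|g\|_{Y_{\tau,\alpha}}$ coming from differentiating the analytic norm in $\tau$. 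Thus $\tau$ must decay at exactly the rate of the forcing, and solving the ODE converts the time-integral \eqref{eq:g:bnd:3} into the bound \eqref{eq:g:bnd:0}.

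I would then carry out the weighted tangentially analytic energy estimate by testing \eqref{eq:g:1} against $\ww^2 \partial_x^{2m} g$, multiplying by $\tau^{2m}M_m^2$, taking square roots, and summing in $m\geq 0$. Integrating the $-\partial_y^2 g$ dissipation by parts against the Gaussian weight $\ww$ with $\alpha < 1/2$ produces on the left both the dissipative seminorm $\|g\|_{D_{\tau,\alpha}}$ and the self-similar damping $\tt^{-1}\|g\|_{Z_{\tau,\alpha}}$; when combined with $\|g\|_{X_{\tau,\alpha}}$ and weighted by powers of $\tt$ dictated by the self-similar variable $z = y/\tt^{1/2}$, these assemble into $\tt^{-1}\|g\|_{B_{\tau,\alpha}}$. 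A Treves-type inequality (Lemma~\ref{lem:Carleman}) plays the role of Poincar\'e, upgrading a fraction of $\|g\|_{D_{\tau,\alpha}}+\|g\|_{Z_{\tau,\alpha}}$ into control of $\|g\|_{X_{\tau,\alpha}}$ itself. The decisive cancellation, built into the definition of the linearly-good unknown, is that the $\kappa$-linear terms $\kappa v\partial_y\phi$ and $\kappa v\partial_y^2\phi$ from \eqref{eq:toy:u}--\eqref{eq:toy:w} are gone, so the right-hand side of the estimate for $g$ contains only quadratic expressions with no $\kappa$-dependence. Each of these is bounded using \eqref{eq:def:uu}--\eqref{eq:def:vv} and Lemma~\ref{lem:u:v:g} by products like $\|g\|_{Y_{\tau,\alpha}}\|g\|_{B_{\tau,\alpha}}/(\tau^{1/2}\tt^{1/4})$, plus terms absorbable in $\tt^{-1}\|g\|_{B_{\tau,\alpha}}$. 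Closing the bootstrap then amounts to assuming \eqref{eq:g:bnd:1}--\eqref{eq:g:bnd:3} with a factor $2$ loss on $[0,T^*)$, integrating the resulting differential inequality together with the ODE for $\tau$, and observing that the $\tt^{-5/4+\delta}$ decay of $\|g\|_{X_{\tau,\alpha}}$ combined with \eqref{eq:cond:1} yields $\tau^{3/2}(t)\geq \tau_0^{3/2} - C_*\tt^\delta/\log(1/\eps)$; the constraint $\tau(t)\geq \tau_0/2$, combined with the lower bound on $\tau_0^{3/2}$ in \eqref{eq:eps:tau:0}, then holds as long as $\tt \leq \exp(\eps^{-1}/\log \eps^{-1})$, which is precisely the claimed lifespan.

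For uniqueness, I would apply the same a priori estimate to the difference $g_1 - g_2$ of two solutions sharing the same initial data, but in a slightly smaller analyticity radius (e.g.\ $\tau(t)/2$) so that the one-derivative loss remains absorbable, and then conclude by Gronwall. For existence, I follow the viscous regularization strategy from the introduction: add $-\nu \partial_x^2 g$ on the left of \eqref{eq:g:1}, construct $g^{(\nu)}$ in the fixed-order Sobolev space $\HCal_{2,1,\alpha/\tt}$ by standard parabolic theory, show a posteriori that $g^{(\nu)}$ is tangentially analytic with radius $\tau^{(\nu)}(t)$ obeying the analogous ODE (here the cushion between the initial analyticity radius $2\tau_0$ assumed in \eqref{eq:cond:1} and the smaller radius $\tau_0/2$ claimed in \eqref{eq:g:bnd:0} is essential to launch the scheme), derive $\nu$-uniform H\"older-$1/2$ control of $\tau^{(\nu)}$ in $t$, extract a convergent subsequence via Arzel\`a-Ascoli, and verify that the $g^{(\nu)}$ form a Cauchy sequence in the $X_{\tau,\alpha}$ norm by a difference estimate patterned on uniqueness. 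The main obstacle, I expect, is the meticulous bookkeeping of the a priori step: every power of $\tt$, every combinatorial factor $M_m$, the constant in the Treves inequality, and the precise matching of the forcing of the $\tau$-ODE with the dissipation produced on the left of the energy estimate must all be tuned sharply --- any sub-sharp choice would degrade the lifespan from $\exp(\eps^{-1}/\log \eps^{-1})$ down to the polynomial $\eps^{-4/3}$ obtained in~\cite{ZhangZhang14}.
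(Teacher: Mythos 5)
Your overall architecture matches the paper's: a weighted, tangentially analytic energy estimate for the good unknown $g$, coupled to a nonlinear ODE for $\tau(t)$ that eats the one-derivative loss, followed by a uniqueness estimate in a shrunken radius and an existence construction via $-\nu\partial_x^2$ regularization with Arzel\`a--Ascoli for the radii. However, the specific ODE you propose for $\tau$ is structurally wrong, and this is not a bookkeeping slip: it is the one ingredient that must be exactly right for the argument to close.

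In the energy inequality \eqref{eq:X:1}, the nonlinear forcing appears as
\[
\left( \dot\tau(t) + \frac{C_0}{\tau(t)^{1/2}}\|g\|_{B_{\tau,\alpha}} \right)\|g\|_{Y_{\tau,\alpha}},
\]
because the time derivative $\frac{d}{dt}\|g\|_{X_{\tau(t),\alpha}}$ produces $\dot\tau\|g\|_{Y_{\tau,\alpha}}$ and, by Lemma~\ref{lem:analytic}, each of $U_m$, $V_m$, $T_m$ is bounded by (a $\tt$-weighted piece of) $\tau^{-1/2}\|g\|_{B_{\tau,\alpha}}\|g\|_{Y_{\tau,\alpha}}$. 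Thus the quantity that $-\dot\tau$ must dominate is the \emph{coefficient} of $\|g\|_Y$, namely $C_0\tau^{-1/2}\|g\|_B$; that is, one should take $-\dot\tau\,\tau^{1/2}\gtrsim \|g\|_{B_{\tau,\alpha}}$, i.e.\ $\frac{d}{dt}\tau^{3/2} + 3C_0\|g\|_{B_{\tau,\alpha}} = 0$, which is exactly \eqref{eq:tau:1}. Your proposal sets $-\dot\tau\tau^{1/2} = K\tt^{-5/4+\delta}\|g\|_{Y_{\tau,\alpha}}$; substituting this into \eqref{eq:X:1} gives a right side of the form $(-K\tt^{-5/4+\delta}\tau^{-1/2}\|g\|_Y + C_0\tau^{-1/2}\|g\|_B)\|g\|_Y$, which is not sign-definite and cannot be closed since $\|g\|_Y$ involves one more $x$-derivative than $\|g\|_X$ and is not controlled pointwise by the bootstrap quantities. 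Relatedly, you assert that solving the $\tau$-ODE converts the time-integral \eqref{eq:g:bnd:3} into the radius bound \eqref{eq:g:bnd:0}; in fact it is \eqref{eq:g:bnd:2} that converts, since integrating $\frac{d}{dt}\tau^{3/2} = -3C_0\|g\|_B$ and using $\int_0^t\ss^{-\delta}\|g\|_B\,ds\lesssim \eps/\delta$ from \eqref{eq:X:3} yields $\tau_0^{3/2}-\tau(t)^{3/2}\lesssim \tt^\delta\eps/\delta = \tt^\delta/\log\frac1\eps$, which is \eqref{eq:g:bnd:0}. Finally, one small point on the continuity framing: the paper does not run a bootstrap for the a priori step --- with the correct ODE, the right side of \eqref{eq:X:1} becomes identically nonpositive, so one simply integrates and reads off \eqref{eq:g:bnd:1}--\eqref{eq:g:bnd:3} directly, no ``factor 2'' buffer needed there. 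Everything else in your outline (the role of Lemma~\ref{lem:Carleman}, the cancellation of the $\kappa$-linear terms via the good unknown, the uniqueness in a smaller radius, the $\nu$-regularization and H\"older-$1/2$ equicontinuity of $\tau^{(\nu)}$) is the paper's route.
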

It follows from the estimates in the next section (cf.~Lemmas~\ref{lem:u:v:g} and~\ref{lem:analytic}) that bounds on $g$, $z g$, and $\partial_y g$ in  $X_{\tau,\alpha}$ imply similar bounds on $u$ and $v$ in $X_{\tau,\alpha}$, and thus \eqref{eq:g:bnd:1}--\eqref{eq:g:bnd:3} directly translate into  bounds for $u^P$ and $v^P$. Moreover, when $g(t) \in \HCal_{2,1,\alpha/\tt}$, then $u(t)$  lies in $\HCal_{2,2,\alpha/\tt}$ and the Prandtl equations \eqref{eq:P:1} hold pointwise in $x$ and in an $L^2$ sense in $y$. We omit these details. 
The proof of Theorem~\ref{thm:rigorous} consists of a priori estimates (cf.~Section~\ref{sec:apriori}), the proof of uniqueness of solutions in this class (cf.~Section~\ref{sec:uniqueness}), and the construction of solutions (cf.~Section~\ref{sec:existence}).

\section{A priori estimates}
\label{sec:apriori}
In this section we give the a priori estimates needed to prove Theorem~\ref{thm:rigorous}. We start with a number of preliminary lemmas, which lead up to Subsection~\ref{sec:conclusion}, where we conclude the a priori bounds.
\subsection{Bounding the diagnostic variables in terms of the prognostic one}
We may use \eqref{eq:u:g} to write
\begin{align} 
\ww(y) u(y) =  \int_0^y  g(\bar y) \ww(\bar y)  \exp\left(\frac{(1-\alpha)}{4 \tt} (\bar y^2 - y^2)\right) d\bar y.
\label{eq:theta:u}
\end{align}
On the one hand, it is immediate from the above that
\begin{align}
\|\ww u\|_{L^\infty_y} \leq \|\ww g\|_{L^1_y}. \label{eq:u:infty:1}
\end{align}
On the other hand,  for $p \in [1,2]$ we may estimate
\begin{align*}
|\ww(y) u(y)| 
&\leq \|\ww g\|_{L^{p/(p-1)}} \left( \int_0^y \exp\left(\frac{p(1-\alpha)}{4 \tt} (\bar y^2 -  y^2)\right) d\bar y \right)^{1/p} \notag\\
&= \| \ww g\|_{L^{p/(p-1)}} \tt^{1/(2p)} \left( \frac{\DWF[z(t,y) K_{p,\alpha}]}{K_{p,\alpha}} \right)^{1/p} 
\end{align*}
where $K_{p,\alpha} =  \sqrt{p(1-\alpha)}/2$
and
\begin{align*}
\DWF[y] = \exp(-y^2) \int_0^y \exp(\bar y^2) d\bar y = \int_0^y \exp(\bar y^2-y^2) d\bar y . 
\end{align*}
It is not hard to check that
\begin{align*}
\DWF[y] \leq \frac{2}{1+y} 
\end{align*}
for all $y\geq 0$. Because there exists a universal constant $C>0$ such that $1/C \leq K_{p,\alpha} \leq C$
for $p \in [1,2]$ and $\alpha \in [1/4,1/2]$, 
it follows that 
\begin{align}
|\ww(t,y) u (t,x,y) | \leq C \tt^{1/(2p)} \|\ww g\|_{L^{p/(p-1)}_y} \frac{1}{(1+z(t,y))^{1/p}}
\label{eq:u:BOUND}
\end{align}
for $p\in [1,2]$. Using \eqref{eq:u:BOUND} and recalling the definition of $v$ in \eqref{eq:v:u} we may prove the following estimates.

\begin{lemma}[\bf Bounds for the diagnostic variables]
\label{lem:u:v:g}
Let $\ww$ be given by \eqref{eq:theta:def} with $\alpha\in [1/4,1/2]$, Define $u = \uu(g)$ and $v = \vv(g)$  by \eqref{eq:def:uu} respectively \eqref{eq:def:vv}. For $m \geq 0$ we have
\begin{align}
\|\partial_x^m u \|_{L^2_x L^\infty_y} &\leq C \tt^{1/4} \|\ww \partial_x^m g\|_{L^2_{x,y}} \label{eq:uu:2:infty}\\
\|\partial_x^m u \|_{L^\infty_{x,y}} &\leq C \tt^{1/4} \|\ww \partial_x^m g\|_{L^2_{x,y}}^{1/2} \|\ww \partial_x^{m+1} g\|_{L^2_{x,y}}^{1/2} \label{eq:uu:infty:infty}\\
\|\ww \partial_x^m g\|_{L^1_y} &\leq C \tt^{1/4} \|\ww \partial_x^m g\|_{L^2_y}^{1/2} \| z \ww \partial_x^m g\|_{L^2_y}^{1/2} \label{eq:g:2:1}\\
\|\ww \partial_x^m u\|_{L^2_{x,y}} &\leq C \tt^{3/4} \|\ww \partial_x^m g\|_{L^2_{x,y}}^{1/2}  \|\ww \partial_x^m \partial_y g\|_{L^2_{x,y}}^{1/2}  + C \tt^{1/2} \|  \ww \partial_x^m g\|_{L^2_{x,y}}^{1/2}    \|  z \ww \partial_x^m g\|_{L^2_{x,y}}^{1/2}
 \label{eq:u:2:2} \\
\|\ww \partial_x^m u\|_{L^\infty_x L^2_{y}} &\leq C \tt^{3/4} \|\ww \partial_x^m g\|_{L^2_{x,y}}^{1/4} \|\ww \partial_x^{m+1} g\|_{L^2_{x,y}}^{1/4}   \|\ww \partial_x^m \partial_y g\|_{L^2_{x,y}}^{1/4}  \|\ww \partial_x^{m+1} \partial_y g\|_{L^2_{x,y}}^{1/4} \notag\\
&\quad + C \tt^{1/2} \|  \ww \partial_x^m g\|_{L^2_{x,y}}^{1/4} \| \ww \partial_x^{m+1} g\|_{L^2_{x,y}}^{1/4}   \| z \ww \partial_x^m g\|_{L^2_{x,y}}^{1/4} \|  z \ww \partial_x^{m+1} g\|_{L^2_{x,y}}^{1/4} \label{eq:u:infty:2} \\
\|\partial_x^m v \|_{L^2_x L^\infty_y} &\leq  C \tt^{3/4}\|\ww \partial_x^{m+1}g\|_{L_{x,y}^2}  \label{eq:v:2:infty}\\
\|\partial_x^m v \|_{L^\infty_{x,y}} &\leq C \tt^{3/4} \|\ww \partial_x^{m+1}g\|_{L_{x,y}^2}^{1/2} \|\ww \partial_x^{m+2}g\|_{L_{x,y}^2}^{1/2} \label{eq:v:infty:infty}
\end{align}
for some universal constant $C>0$, which is independent of $\alpha \in [1/4,1/2]$.
\end{lemma}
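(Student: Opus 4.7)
All seven estimates descend from the pointwise inequality \eqref{eq:u:BOUND}, which is available for any $p\in[1,2]$, combined with three elementary ingredients: (i) the trivial fact $\theta_\alpha \geq 1$; (ii) the one-dimensional Agmon inequality $\|f\|_{L^\infty}^2 \leq 2\|f\|_{L^2}\|f'\|_{L^2}$; and (iii) the representation $\partial_x^m v(y) = -\int_0^y \partial_x^{m+1} u(\bar y)\,d\bar y$ coming from \eqref{eq:v:u}. I would treat the bounds in the order \eqref{eq:uu:2:infty}--\eqref{eq:uu:infty:infty}, then \eqref{eq:g:2:1}, then \eqref{eq:v:2:infty}--\eqref{eq:v:infty:infty}, and finally the most involved pair \eqref{eq:u:2:2}--\eqref{eq:u:infty:2}.

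\textbf{The bounds \eqref{eq:uu:2:infty}--\eqref{eq:uu:infty:infty} and \eqref{eq:g:2:1}.} For \eqref{eq:uu:2:infty}, apply \eqref{eq:u:BOUND} with $p=2$ to $\partial_x^m u = \uu(\partial_x^m g)$; the factor $(1+z)^{-1/2}$ is uniformly bounded and $|\partial_x^m u|\le|\theta_\alpha \partial_x^m u|$, so an $L^2_x$ integration yields the stated $L^2_x L^\infty_y$ estimate. The bound \eqref{eq:uu:infty:infty} follows from \eqref{eq:uu:2:infty} by Agmon in the $x$-variable, and \eqref{eq:v:infty:infty} follows from \eqref{eq:v:2:infty} in exactly the same way. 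For \eqref{eq:g:2:1}, set $h=\theta_\alpha \partial_x^m g$ and split $\int_0^\infty|h|\,dy$ at $y_0=\lambda\langle t\rangle^{1/2}$: Cauchy--Schwarz on $[0,y_0]$ and writing $|h|=|zh|/z$ on $[y_0,\infty)$ produce
\[
\|h\|_{L^1_y} \leq \langle t\rangle^{1/4}\bigl(\lambda^{1/2}\|h\|_{L^2_y} + \lambda^{-1/2}\|zh\|_{L^2_y}\bigr),
\]
and optimization over $\lambda>0$ gives the geometric-mean estimate.

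\textbf{The bound \eqref{eq:v:2:infty} for $v$.} Write $\partial_x^m v(y) = -\int_0^y \partial_x^{m+1} u(\bar y)\,d\bar y$ and factor the integrand as $|\partial_x^{m+1} u(\bar y)| = \theta_\alpha(\bar y)^{-1}\cdot |\theta_\alpha \partial_x^{m+1} u(\bar y)|$; crucially, the weight $\theta_\alpha^{-1}(\bar y)=\exp(-\alpha \bar y^2/(4\langle t\rangle))$ is exactly what renders the resulting $\bar y$-integral convergent. Applying \eqref{eq:u:BOUND} with $p=2$ to the second factor and using
\[
\int_0^\infty \frac{\theta_\alpha(\bar y)^{-1}}{(1+z(\bar y))^{1/2}}\,d\bar y \leq C\langle t\rangle^{1/2}
\]
produces the pointwise (in both $x$ and $y$) bound $|\partial_x^m v(y)|\leq C\langle t\rangle^{3/4}\|\theta_\alpha \partial_x^{m+1} g\|_{L^2_y}$, and an $L^2_x$ integration yields \eqref{eq:v:2:infty}.

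\textbf{The bounds \eqref{eq:u:2:2}--\eqref{eq:u:infty:2} and the main obstacle.} For \eqref{eq:u:2:2}, apply \eqref{eq:u:BOUND} with $p=1$ and square-integrate in $y$ (using $\int_0^\infty dy/(1+z)^2 \leq C\langle t\rangle^{1/2}$) to obtain the intermediate estimate $\|\theta_\alpha \partial_x^m u\|_{L^2_y}\leq C\langle t\rangle^{3/4}\|\theta_\alpha\partial_x^m g\|_{L^\infty_y}$. Then apply Agmon in $y$ to $\theta_\alpha \partial_x^m g$, using
\[
\partial_y(\theta_\alpha \partial_x^m g) = \theta_\alpha\partial_x^m\partial_y g + \frac{\alpha z}{2\langle t\rangle^{1/2}}\theta_\alpha \partial_x^m g
\]
and the triangle inequality $\sqrt{a+b}\le\sqrt a+\sqrt b$; after Cauchy--Schwarz in $x$ this produces the two summands on the right of \eqref{eq:u:2:2}. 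For \eqref{eq:u:infty:2}, apply Agmon in $x$ pointwise in $y$ and then $L^2_y$-integrate to get $\|\theta_\alpha\partial_x^m u\|_{L^\infty_x L^2_y}^2 \leq 2\|\theta_\alpha\partial_x^m u\|_{L^2_{x,y}}\|\theta_\alpha\partial_x^{m+1} u\|_{L^2_{x,y}}$, and invoke \eqref{eq:u:2:2} for both indices; the four-factor geometric-mean structure then follows from the algebraic identity $(A_m + B_m)(A_{m+1} + B_{m+1}) \leq (\sqrt{A_m A_{m+1}} + \sqrt{B_m B_{m+1}})^2$ applied with $A_m, B_m$ equal to the two summands on the right of \eqref{eq:u:2:2} at index $m$. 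The only mildly delicate points are the geometric-mean (rather than sum) form of \eqref{eq:g:2:1} and \eqref{eq:u:2:2}, forced respectively by the one-parameter optimization and by routing the $L^2_y$ control of $\theta_\alpha u$ through $\|\theta_\alpha g\|_{L^\infty_y}$ and Agmon in $y$; once these tricks are in place, everything else is routine weighted bookkeeping.
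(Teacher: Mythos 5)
Your proposals for \eqref{eq:uu:2:infty}, \eqref{eq:uu:infty:infty}, \eqref{eq:g:2:1}, \eqref{eq:u:2:2}, \eqref{eq:v:2:infty}, and \eqref{eq:v:infty:infty} track the paper's proof essentially verbatim, modulo cosmetic differences (using the pre-packaged \eqref{eq:u:BOUND} rather than re-deriving it, splitting at $\lambda\tt^{1/2}$ rather than $R$, bounding $\int_0^\infty \theta_\alpha^{-1}(1+z)^{-1/2}\,d\bar y$ directly rather than pulling out $\|\theta_\alpha^{-1}\|_{L^1_y}$ and $\|\theta_\alpha u\|_{L^\infty_y}$ separately).

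There is, however, a genuine error in your argument for \eqref{eq:u:infty:2}. The ``algebraic identity'' you invoke,
\[
(A_m + B_m)(A_{m+1} + B_{m+1}) \;\leq\; \bigl(\sqrt{A_m A_{m+1}} + \sqrt{B_m B_{m+1}}\bigr)^2,
\]
is false; in fact the reverse holds. Setting $a_i=\sqrt{A_i}$, $b_i=\sqrt{B_i}$, the right-hand side is $(a_m a_{m+1}+b_m b_{m+1})^2$ and the left-hand side is $(a_m^2+b_m^2)(a_{m+1}^2+b_{m+1}^2)$, so the claimed inequality is exactly Cauchy--Schwarz written backwards. It is also not salvageable by inserting a constant: taking $A_m=B_{m+1}=1$, $A_{m+1}=B_m=0$ makes the left-hand side $1$ and the right-hand side $0$, so no bound of the form $\text{LHS}\leq C\,\text{RHS}$ can hold. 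Consequently, applying Agmon in $x$ \emph{after} you have already collapsed the two summands of \eqref{eq:u:2:2} into a single quantity produces cross terms $\sqrt{A_m B_{m+1}}+\sqrt{B_m A_{m+1}}$ that the stated estimate \eqref{eq:u:infty:2} does not control.

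The fix, which is how the paper proceeds, is to apply Agmon in $x$ \emph{before} the two-term structure is sealed. Retain the intermediate pointwise-in-$x$ estimate
\[
\|\theta_\alpha\partial_x^m u(x,\cdot)\|_{L^2_y}
\leq C\tt^{3/4}\|\theta_\alpha\partial_x^m g(x,\cdot)\|_{L^2_y}^{1/2}\|\theta_\alpha\partial_x^m\partial_y g(x,\cdot)\|_{L^2_y}^{1/2}
+ C\tt^{1/2}\|\theta_\alpha\partial_x^m g(x,\cdot)\|_{L^2_y}^{1/2}\|z\theta_\alpha\partial_x^m g(x,\cdot)\|_{L^2_y}^{1/2},
\]
take $L^\infty_x$ of each summand separately, and apply 1D Agmon in $x$ to each of the four $L^2_y$-factors (using $|\partial_x\|h(x,\cdot)\|_{L^2_y}|\leq\|\partial_x h(x,\cdot)\|_{L^2_y}$). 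This yields the two four-factor geometric means in \eqref{eq:u:infty:2} directly, with no cross terms.
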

\begin{proof}[Proof of Lemma~\ref{lem:u:v:g}]
From identity \eqref{eq:theta:u} we have
\begin{align*} 
|\partial_x^m u(y)|
&\leq \frac{1}{\ww(y)} \int_0^y |\ww(\bar y) \partial_x^m g(\bar y) | \exp\left(\frac{(1-\alpha)}{4 \tt} (\bar y^2 - y^2)\right) d\bar y\notag\\
&\leq \|\ww \partial_x^m g\|_{L^2_y} \frac{\sqrt{y}}{\ww(y)} = \tt^{1/4} \|\ww \partial_x^m g\|_{L^2_y} \sqrt{z} \exp\left(- \frac{\alpha z^2}{4}\right) \notag\\
&\leq C \tt^{1/4} \|\ww \partial_x^mg\|_{L^2_y}.
\end{align*}
The bound \eqref{eq:uu:2:infty} follows by taking the $L^2$ norm in $x$ of the above, while the bound \eqref{eq:uu:infty:infty} follows upon additionally applying the 1D Agmon inequality in the $x$ variable,
\begin{align*}
\|f \|_{L^\infty_x} \leq C \|f\|_{L^2_x}^{1/2} \|\partial_x f\|_{L^2_x}^{1/2}.
\end{align*}
To bound $\ww \partial_x^m u$, we note that for  $R>0$ we have
\begin{align*}
\| \ww \partial_x^m g\|_{L^1_y} 
&= \int_0^R |\ww (y) \partial_x^m g(y)| dy + \int_R^\infty |y \ww(y) \partial_x^m g(y)| |y|^{-1} dy\notag\\
&\leq R^{1/2} \|\ww \partial_x^m g\|_{L^2_y} +R^{-1/2} \| y \ww \partial_x^m g\|_{L^2_y}
\end{align*}
which upon optimizing in $R$ yields
\begin{align*}
\|\ww \partial_x^m g\|_{L^1_y} 
&\leq C \|\ww \partial_x^m g\|_{L^2_y}^{1/2} \| y \ww \partial_x^m g\|_{L^2_y}^{1/2} \notag\\
&\leq C \tt^{1/4} \|\ww \partial_x^m g\|_{L^2_y}^{1/2} \| z \ww \partial_x^m g\|_{L^2_y}^{1/2}.
\end{align*}
Upon taking $L^2$ norm in $x$, this proves \eqref{eq:g:2:1}.
When combined with \eqref{eq:u:infty:1}, we obtain from the above that
\begin{align*}
\|\ww \partial_x^m u\|_{L^\infty_y} \leq C  \tt^{1/4} \|\ww \partial_x^m g\|_{L^2_y}^{1/2} \| z \ww \partial_x^m g\|_{L^2_y}^{1/2}.
\end{align*}
In order to prove \eqref{eq:u:2:2}, we use \eqref{eq:u:BOUND} with $p=1$ and the 1D Agmon inequality
in the $y$ variable to obtain
\begin{align*}
\|\ww \partial_x^m u\|_{L^2_y} 
&\leq C \tt^{1/2} \|\ww \partial_x^m g\|_{L^\infty_y} \| (1+z(t,y))^{-1}\|_{L^2_y}\notag\\
&\leq C \tt^{3/4} \|\ww \partial_x^m g\|_{L^2_y}^{1/2} \left( \|\ww \partial_x^m \partial_y g\|_{L^2_y}  + \|  \partial_y \ww \partial_x^m g\|_{L^2_y}  \right)^{1/2} \notag\\
&\leq C \tt^{3/4} \|\ww \partial_x^m g\|_{L^2_y}^{1/2}  \|\ww \partial_x^m \partial_y g\|_{L^2_y}^{1/2}  + C \tt^{1/2} \| \ww \partial_x^m g\|_{L^2_y}^{1/2}    \|z \ww \partial_x^m g\|_{L^2_y}^{1/2}.
\end{align*}
Taking the $L^2$ norm in $x$ of the above yields \eqref{eq:u:2:2}, while an application of the 1D Agmon inequality in $x$ gives \eqref{eq:u:infty:2}.
For the $v$ bounds, we use \eqref{eq:u:BOUND} with $p=2$ and obtain
\begin{align*} 
\|\partial_x^m v\|_{L^\infty_y} 
\leq \|\partial_x^{m+1} u\|_{L^1_y} 
&\leq \|\ww \partial_x^{m+1} u\|_{L^\infty_y} \|\ww^{-1}\|_{L^1_y}  \notag\\
&\leq C \tt^{1/2} \|\ww \partial_x^{m+1} u\|_{L^\infty_y}  \notag\\
&\leq C \tt^{3/4} \| \ww \partial_x^{m+1} g\|_{L^2_y}.
\end{align*}
Integrating in $x$ the above implies \eqref{eq:v:2:infty}. An extra use of the 1D Agmon inequality yields \eqref{eq:v:infty:infty}.
\end{proof}

\begin{remark} The first two estimates in Lemma~\ref{lem:u:v:g} also hold in the case when we don't use the weight (i.e., when $\ww=1$). Indeed, we use the relation
\begin{align*}
u(y)=\int_0^yg(\bar{y})\exp\left(\frac{\bar{y}^2-y^2}{4\tt}\right)\,d\bar{y},
\end{align*}
which implies that
\begin{align*}
|u(y)|\le C\tt^{1/2p}\|g\|_{L_y^{p/p-1}}\frac{1}{(1+z)^{1/p}}
\end{align*}
for $1\le p\le\infty$.
\end{remark}

\subsection{Weighted Sobolev energy estimates for the good unknown} 

Let $m\geq 0$. We apply $\partial_x^m$ to
\eqref{eq:g:1}, multiply the resulting equation with $\ww^2\partial_x^mg$ and integrate over $\HH$ to obtain
\begin{align}
\frac{1}{2}&\frac{d}{dt}\|\ww\partial_x^mg\|_{L^2}^2
+\|\ww\partial_y\partial_x^mg\|_{L^2}^2
+\frac{\alpha(1-2\alpha)}{4\tt}\|z\ww \partial_x^mg\|_{L^2}^2
+\frac{2-\alpha}{2\tt}\|\ww\partial_x^mg\|_{L^2}^2\notag\\
&=-\sum_{j=0}^m\binom{m}{j}\int\partial_x^{m-j}u\ww\partial_x^{j+1}g\ww\partial_x^mg
-\sum_{j=0}^m\binom{m}{j}\int\partial_x^{m-j}v\ww\partial_y\partial_x^jg\ww\partial_x^mg \notag\\
&\qquad +\frac{1}{2\tt}\sum_{j=0}^m\binom{m}{j}\int\partial_x^jv\ww\partial_x^{m-j}u\ww\partial_x^mg\notag\\
&=U_m+V_m+T_m.
\label{eq:g:m}
\end{align} 
Here we have used the boundary conditions \eqref{eq:g:2}--\eqref{eq:g:3} and the cancellation
\begin{align*} 
\int \phi \partial_x^{m+1} g \ww^2 \partial_x^m g dx  = 0,
\end{align*}
which follows upon integration by parts and the fact that $\partial_x (\phi \ww^2) = 0$. Dividing \eqref{eq:g:m} by $\|\ww\partial_x^mg\|_{L^2}$, multiplying by $\tau^m M_m$, and using the notations \eqref{eq:Xm}--\eqref{eq:Ym} and \eqref{eq:tilde:D:tau}--\eqref{eq:tilde:B:tau}, we arrive at 
\begin{align}
\frac{d}{dt} X_m
+\tilde D_m
+\frac{\alpha(1-2\alpha)}{4\tt} \tilde Z_m
+\frac{2-\alpha}{2\tt} X_m
= \frac{\tau^m M_m}{\|\theta_\alpha \partial_x^m g\|_{L^2}} (U_m + V_m+ T_m)
\label{eq:g:m:*}
\end{align}
In the next subsection we obtain lower bounds for the dissipative and damping terms on the left side of \eqref{eq:g:m:*}, while in the following subsection we estimate the nonlinear terms on the right side of \eqref{eq:g:m:*}.

\subsection{Bounds for the dissipative and damping terms}
\begin{lemma}[\bf Poincar\'e inequality with gaussian weights]
\label{lem:Carleman}
Let $g$ be such that $\partial_y g|_{y=0} = 0$ and $g|_{y=\infty} = 0$. For $\alpha \in [1/4,1/2]$, $m \geq 0$, and $t\geq 0$ it holds that
\begin{align}
 \frac{\alpha}{\tt} \| \ww \partial_x^m g\|_{L^2_y}^2 \leq \|\ww \partial_y \partial_x^m g\|_{L^2_y}^2 
\label{eq:CE}
\end{align}
where $\ww(t,y)=\exp\left(\frac{\alpha y^2}{4\tt}\right)$. 
\end{lemma}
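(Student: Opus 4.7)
The plan is to prove the weighted inequality via a single integration by parts, together with a Young's inequality whose constant is tuned so that an auxiliary $y^2$-weighted term cancels exactly. Write $h = \partial_x^m g$; the hypotheses pass to $h$, giving $\partial_y h|_{y=0}=0$ and $h|_{y=\infty}=0$. The key identity is the gaussian relation
\begin{equation*}
\partial_y(\ww^2) = \frac{\alpha y}{\tt}\, \ww^2,
\end{equation*}
which is all that ties the lemma to the specific form of $\ww$.

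First, I would integrate $\ww^2 h^2$ against $y$ in the sense of integration by parts:
\begin{equation*}
\int_0^\infty \ww^2 h^2 \, dy = \bigl[y \ww^2 h^2\bigr]_0^\infty - \int_0^\infty y\, \partial_y(\ww^2 h^2)\, dy = -\frac{\alpha}{\tt}\int_0^\infty y^2 \ww^2 h^2\, dy - 2\int_0^\infty y\, \ww^2 h\, \partial_y h\, dy.
\end{equation*}
The boundary term at $y=0$ vanishes because of the explicit $y$ factor, and the one at $y=\infty$ vanishes by the gaussian decay of $\ww h$ implied by membership in the natural weighted space (say $\HCal_{2,1,\alpha/\tt}$). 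Rearranging,
\begin{equation*}
\int_0^\infty \ww^2 h^2\, dy + \frac{\alpha}{\tt}\int_0^\infty y^2 \ww^2 h^2\, dy = -2\int_0^\infty (y\ww h)(\ww \partial_y h)\, dy.
\end{equation*}

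Next, I apply Young's inequality with the carefully tuned constant $2|ab| \leq \frac{\alpha}{\tt} a^2 + \frac{\tt}{\alpha} b^2$, with $a = y\ww h$ and $b = \ww \partial_y h$, to the right side. The $\frac{\alpha}{\tt}\|y \ww h\|_{L^2}^2$ contribution then cancels exactly with the same expression on the left side, leaving
\begin{equation*}
\|\ww h\|_{L^2_y}^2 \leq \frac{\tt}{\alpha}\, \|\ww \partial_y h\|_{L^2_y}^2,
\end{equation*}
which is the claimed inequality after multiplying through by $\alpha/\tt$.

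I do not expect any real obstacle: the calculation is a two-line integration by parts. The only point to be careful about is the vanishing of the boundary term at infinity, which should be justified by density, first proving the inequality for $h$ in the test-function class $\mathcal{D}$ (or smooth functions with rapid decay) and then extending to the full weighted Sobolev space by passing to the limit in both sides of \eqref{eq:CE}. Notably, the condition $\partial_y h|_{y=0}=0$ is not actually needed for this proof — it is subsumed by the explicit $y$ factor introduced in the integration by parts — which also suggests that the factor $\alpha/\tt$ on the left is optimal and should not be improvable by exploiting the Neumann condition alone.
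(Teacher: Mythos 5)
Your proof is correct, and it takes a genuinely different route from the paper's. The paper starts from the right-hand side: writing $\ww\partial_y g = \partial_y(\ww g) - \frac{\alpha y}{2\tt}\ww g$ and invoking the algebraic identity $(a-b)^2 = (a+b)^2 - 4ab$ so that
\begin{equation*}
\int (\ww \partial_y g )^2 \,dy =\int \left(\partial_y  (\ww g  ) + \frac{\alpha y}{2\tt} \ww g\right)^2\,dy + \frac{\alpha}{\tt} \int   (\ww g)^2 \, dy \geq \frac{\alpha}{\tt}\int (\ww g)^2 dy,
\end{equation*}
where the middle step uses one integration by parts. That argument never introduces an independent $y^2$-weighted term, so there is nothing to cancel: the inequality falls out of the manifest nonnegativity of the first integral, and it additionally exhibits the \emph{defect} in the inequality explicitly. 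Your argument instead starts from the left-hand side, integrates $\int \ww^2 h^2\, dy$ by parts against $y$, and uses a tuned Young's inequality so the $\frac{\alpha}{\tt}\|y\ww h\|^2$ term cancels. The two proofs are of course cousins (Young's inequality with equal weights is $(a\mp b)^2\ge 0$ in disguise), but yours has one hygiene point the paper's does not: you subtract $\frac{\alpha}{\tt}\int y^2\ww^2 h^2\,dy$ from both sides, which is only legitimate if that quantity is finite. You correctly flag the remedy — prove the estimate first on the test class and extend by density, so that the cancellation is always between finite numbers — but this step is genuinely load-bearing in your version, whereas the paper's rearrangement is free of it. Your closing observation is also correct and applies to the paper's proof too: the Neumann condition $\partial_y h|_{y=0}=0$ is never used, since the boundary term at $y=0$ dies because of the explicit $y$ factor; what both proofs do need is decay of $y(\ww h)^2$ (or $y\ww^2 h\,\partial_y h$) at $y=\infty$, which is supplied by the weighted function space.
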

\begin{proof}[Proof of Lemma~\ref{lem:Carleman}]
The above inequality is classical, and it is a special case of the Treves inequality which can be found in~\cite{Hormander83}. For simplicity, we give a short proof for the case $m=0$. 
Note that 
\[
\ww \partial_y g = \partial_y (\ww g) - \frac{\alpha y}{2\tt} \ww g
\]
as can be checked directly. Using that $(a-b)^2 = (a+b)^2 - 4 ab$ it then follows that 
\begin{align*}
\int (\ww \partial_y g )^2 \,dy
&=\int \left(\partial_y  (\ww g  ) -\frac{\alpha y}{2\tt} \ww g\right)^2\,dy\\
&=\int \left(\partial_y  (\ww g  ) + \frac{\alpha y}{2\tt} \ww g\right)^2\,dy - \frac{\alpha}{\tt} \int 2 y (\ww g) \partial_y (\ww g) \, dy\\
&=\int \left(\partial_y  (\ww g  ) + \frac{\alpha y}{2\tt} \ww g\right)^2\,dy + \frac{\alpha}{\tt} \int   (\ww g)^2 \, dy\\
&\ge\frac{\alpha}{\tt}\int (\ww g)^2 \,dy,
\end{align*}
upon integrating by parts with respect to $y$ in the third equality. No boundary terms arise in this process.
\end{proof}

Using Lemma~\ref{lem:Carleman} we may bound the dissipation term in \eqref{eq:g:m} from below as
\begin{align}
\frac{\|\ww \partial_y \partial_x^m g\|_{L^2}^2}{\|\ww \partial_x^m g\|_{L^2}} 
&\geq \frac{\beta}{2}\frac{\|\ww \partial_y \partial_x^m g\|_{L^2}^2}{\|\ww \partial_x^m g\|_{L^2}}  + \frac{2-\beta}{2} \frac{\alpha^{1/2}}{\tt^{1/2}} \|\ww \partial_y \partial_x^m g\|_{L^2} \notag\\
&\geq  \frac{\beta}{2}\frac{\|\ww \partial_y \partial_x^m g\|_{L^2}^2}{\|\ww \partial_x^m g\|_{L^2}} + \frac{\alpha^{1/2} \beta}{2 \tt^{1/2}} \|\ww \partial_y \partial_x^m g\|_{L^2} +  \frac{\alpha (1-\beta)}{\tt} \|\ww\partial_x^m g\|_{L^2} 
\label{eq:g:m:D}
\end{align}
where $\beta \in (0,1/2)$ is to be chosen precise later.

For the damping terms in \eqref{eq:g:m} we have the lower bounds
\begin{align} 
&\frac{1}{\|\ww \partial_x^m g\|_{L^2}} \left( \frac{\alpha(1-2\alpha)}{4\tt}\|z\ww \partial_x^mg\|_{L^2}^2
+\frac{2-\alpha}{2\tt}\|\ww\partial_x^mg\|_{L^2}^2 \right) \notag\\
&=\frac{1}{\|\ww \partial_x^m g\|_{L^2}} \left( \frac{\alpha}{4\tt}\|((1-2\alpha)z^2+ 4 \gamma)^{1/2} \ww \partial_x^mg\|_{L^2}^2
+\frac{1-\alpha/2-\alpha \gamma}{\tt}\|\ww\partial_x^mg\|_{L^2}^2 \right) \notag\\
& \geq \frac{\alpha (1-2\alpha)}{8\tt} \frac{\|z \ww \partial_x^m g\|_{L^2}^2}{\|\ww \partial_x^m g\|_{L^2}}  + \frac{\alpha\gamma^{1/2}(1-2\alpha)^{1/2}}{4\tt} \|z \ww \partial_x^m g\|_{L^2} +  \frac{1-\alpha/2-\alpha \gamma}{\tt}\|\ww\partial_x^mg\|_{L^2}  \label{eq:g:m:Z}.
\end{align}
In the last inequality above we used that 
\begin{align*}
((1-2\alpha)z^2+ 4 \gamma)^{1/2} & \geq 2 \gamma^{1/2}\notag\\
((1-2\alpha)z^2+ 4 \gamma)^{1/2}& \geq (1-2\alpha)^{1/2} z
\end{align*}
which holds for all $z \geq 0$, when $\alpha \in [1/4,1/2]$ and $\gamma \in [0,1/2]$.
In summary, in this subsection we have proven the following bounds.
\begin{lemma}[\bf Lower bounds for the damping and dissipative terms]
\label{lem:LEFT}
Fix $\alpha \in [1/4,1/2]$, and let $\beta,\gamma \in [0,1/2]$ be arbitrary. Then we have
\begin{align}
&\sum_{m\geq 0} \left( \tilde D_m +  \frac{\alpha(1-2\alpha)}{4\tt} \tilde Z_m +\frac{2-\alpha}{2\tt} X_m \right) \notag\\
&\quad \geq \frac{\beta}{2} \|g\|_{\tilde D_{\tau,\alpha}} + \frac{\alpha(1-2\alpha)}{8\tt} \|g\|_{\tilde Z_{\tau,\alpha}}
+ \frac{\alpha^{1/2} \beta }{2 \tt^{1/2}} \|g\|_{D_{\tau,\alpha}} + \frac{\alpha\gamma^{1/2}(1-2\alpha)^{1/2}}{4\tt} \|g\|_{Z_{\tau,\alpha}}\notag\\
&\qquad + \frac{1 + \alpha(1/2-\gamma-\beta)}{\tt} \|g\|_{X_{\tau,\alpha}}
\label{eq:LEFT}
\end{align}
independently of $\tau>0$.
\end{lemma}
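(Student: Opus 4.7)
The plan is to prove the inequality termwise in the summation index $m$ and then sum in $m$. Since the right-hand side of \eqref{eq:LEFT} is itself a sum over $m$, it suffices for each $m\ge 0$ to lower-bound $\tilde D_m + \frac{\alpha(1-2\alpha)}{4\tt}\tilde Z_m + \frac{2-\alpha}{2\tt}X_m$ by the corresponding $m$th integrand on the right. I will handle the dissipative contribution $\tilde D_m$ and the damping combination $\frac{\alpha(1-2\alpha)}{4\tt}\tilde Z_m + \frac{2-\alpha}{2\tt}X_m$ separately, then add.

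For the dissipative term, the tool is Lemma~\ref{lem:Carleman}, which after multiplication by $\tau^m M_m$ reads $D_m \ge (\alpha/\tt)^{1/2}X_m$. Split $1=\beta/2+(2-\beta)/2$ and rewrite the second piece using $\tilde D_m = D_m^2/X_m \ge (\alpha/\tt)^{1/2}D_m$. Further decomposing $(2-\beta)/2 = \beta/2 + (1-\beta)$ and reapplying the Poincar\'e bound on the $(1-\beta)$ piece yields
\[
\tilde D_m \ge \frac{\beta}{2}\tilde D_m + \frac{\alpha^{1/2}\beta}{2\tt^{1/2}}D_m + \frac{(1-\beta)\alpha}{\tt}X_m,
\]
which already accounts for every $\beta$-dependent term on the right side of \eqref{eq:LEFT}.

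For the damping combination, the key rearrangement is to transfer $\alpha\gamma/\tt\cdot X_m$ into the $\tilde Z_m$ part, merging both contributions into a single weighted ratio with multiplier $q(z)^2 := (1-2\alpha)z^2 + 4\gamma$, and leaving the residual coefficient $(1-\alpha/2-\alpha\gamma)/\tt$ in front of $X_m$. The resulting quantity $\frac{\alpha}{4\tt}\|q\,\theta_\alpha\partial_x^m g\|_{L^2}^2 / \|\theta_\alpha \partial_x^m g\|_{L^2}$ is then halved. One half, bounded via $q^2 \ge (1-2\alpha)z^2$, returns $\frac{\alpha(1-2\alpha)}{8\tt}\tilde Z_m$. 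For the other half, write $f=\theta_\alpha\partial_x^m g$ and use the two pointwise bounds $q \ge 2\gamma^{1/2}$ and $q \ge (1-2\alpha)^{1/2}z$ successively:
\[
\frac{\|q f\|_{L^2}^2}{2\|f\|_{L^2}} \ge \|q f\|_{L^2}\cdot \gamma^{1/2} \ge \gamma^{1/2}(1-2\alpha)^{1/2}\|zf\|_{L^2},
\]
so that after multiplication by $\alpha/(4\tt)$ and $\tau^m M_m$ one obtains the linear-in-$z$ contribution $\frac{\alpha\gamma^{1/2}(1-2\alpha)^{1/2}}{4\tt} Z_m$.

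Adding the two sets of bounds, the total $X_m$ coefficient becomes $(1-\beta)\alpha/\tt + (1-\alpha/2 - \alpha\gamma)/\tt = (1+\alpha(1/2-\gamma-\beta))/\tt$, which matches \eqref{eq:LEFT}; summing over $m\ge 0$ concludes the proof. The only subtle point is the double split producing the linear $Z_m$ term: one must use two different pointwise lower bounds on $q$ and then keep enough of the quadratic piece to preserve the $\tilde Z_m$ damping on the right side; the parameters $\beta$ and $\gamma$ remain free because no sign information on $1+\alpha(1/2-\gamma-\beta)$ is required at this stage, and all other manipulations are elementary.
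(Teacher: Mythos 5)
Your proof is correct and follows essentially the same route as the paper: the splits $1 = \beta/2 + \beta/2 + (1-\beta)$ applied together with the weighted Poincar\'e inequality (Lemma~\ref{lem:Carleman}) for the dissipative term, and the algebraic regrouping into $q(z)^2 = (1-2\alpha)z^2 + 4\gamma$ followed by halving and the two pointwise bounds $q \geq 2\gamma^{1/2}$, $q \geq (1-2\alpha)^{1/2} z$ for the damping term, are exactly the steps leading to \eqref{eq:g:m:D} and \eqref{eq:g:m:Z} in the text. The termwise bound in $m$ and subsequent summation is also how the paper concludes.
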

\begin{proof} The lemma follows upon recasting \eqref{eq:g:m:D} and \eqref{eq:g:m:Z} as 
\begin{align*} 
\tilde D_m 
&\geq \frac{\beta}{2} \tilde D_m + \frac{\alpha^{1/2} \beta }{2 \tt^{1/2}} D_m + \frac{\alpha (1-\beta)}{\tt} X_m \\
\frac{2-\alpha}{2\tt} X_m + \frac{\alpha(1-2\alpha)}{4\tt} \tilde Z_m 
&\geq \frac{\alpha(1-2\alpha)}{8\tt} \tilde Z_m + \frac{\alpha\gamma^{1/2}(1-2\alpha)^{1/2}}{4\tt}  Z_m +  \frac{1-\alpha/2-\alpha \gamma}{\tt} X_m
\end{align*}
and summing over $m\geq 0$.
\end{proof}

\subsection{Bounds for the nonlinear terms}
In this subsection we bound the nonlinear terms on the right side of \eqref{eq:g:m:*} for every $m\geq 0$, cf.~estimates~\eqref{eq:g:m:U}, \eqref{eq:g:m:V}, \eqref{eq:g:m:T} below. When summed over $m\geq 0$ we obtain the following tangentially analytic estimates  for the nonlinear terms.
\begin{lemma}[\bf Estimates for the nonlinearity]
\label{lem:analytic}
There exits a universal constant $C\geq 1$ such that the bounds
\begin{align} 
&\sum_{m\geq 0}\frac{|U_m| \tau^m M_m}{\|\ww\partial_x^mg\|_{L^2}} \leq \frac{C \tt^{1/4}}{\tau(t)^{1/2}} \|g\|_{X_{\tau,\alpha}} \|g\|_{Y_{\tau,\alpha}} \\
&\sum_{m\geq 0}\frac{|V_m| \tau^m M_m}{\|\ww\partial_x^mg\|_{L^2}} \leq \frac{C \tt^{3/4}}{\tau(t)^{1/2}} \|g\|_{D_{\tau,\alpha}} \|g\|_{Y_{\tau,\alpha}} \\
&\sum_{m\geq 0}\frac{|T_m| \tau^m M_m}{\|\ww\partial_x^mg\|_{L^2}} \leq \frac{C_0 \tt^{1/4}}{\tau(t)^{1/2}} \|g\|_{X_{\tau,\alpha}}^{1/2} \|g\|_{Z_{\tau,\alpha}}^{1/2} \|g\|_{Y_{\tau,\alpha}} + \frac{C \tt^{1/2}}{\tau(t)^{1/2}} \|g\|_{X_{\tau,\alpha}}^{1/2} \|g\|_{D_{\tau,\alpha}}^{1/2} \|g\|_{Y_{\tau,\alpha}} 
\end{align}
hold for every $\tau>0$ and $\alpha \in [1/4,1/2]$. 
\end{lemma}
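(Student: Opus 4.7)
The plan is to bound $U_m$, $V_m$, and $T_m$ pointwise in $m$ by a two-case split in $j$, and then sum in $m$ via the combinatorial identity
\[
\binom{m}{j} \frac{M_m}{M_j M_{m-j}} = \frac{\sqrt{m+1}}{\sqrt{(j+1)(m-j+1)}},
\]
which follows at once from $M_m = \sqrt{m+1}/m!$. On each subrange I will invoke a single diagnostic bound from Lemma~\ref{lem:u:v:g} on the lower-derivative factor (placed in $L^\infty_{x,y}$ or $L^2_x L^\infty_y$) and handle the higher-derivative $g$-factor in $L^2_{x,y}$, applying a 1D Agmon inequality in $x$ on whichever factor needs to be taken in $L^\infty_x$. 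The Agmon step buys half an $x$-derivative; combined with the $\sqrt{m+1}$ from the identity above this reconstitutes the full one-derivative gain encoded by the $Y$-norm, while the residual half-power of $\tau$ produces the prefactor $\tau^{-1/2}$ appearing in each estimate.

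For $U_m$, Cauchy--Schwarz in $\ww \partial_x^m g$ leaves $\|\partial_x^{m-j} u \cdot \ww \partial_x^{j+1} g\|_{L^2}$, which I split at $j+1 = \lceil (m+1)/2 \rceil$. In the low-$j$ range I use \eqref{eq:uu:2:infty} on $u$ (paying $\tt^{1/4}$) together with Agmon in $x$ on $\ww \partial_x^{j+1} g$; in the high-$j$ range I use \eqref{eq:uu:infty:infty} on $u$ (again $\tt^{1/4}$) with $\ww \partial_x^{j+1} g$ directly in $L^2$. In either case exactly one factor carries one extra $x$-derivative (populating the $Y$-sequence) while the other is an $X$-term, yielding the stated bound. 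For $V_m$ the structure is identical, but $v$ is estimated via \eqref{eq:v:2:infty}--\eqref{eq:v:infty:infty} at cost $\tt^{3/4}$ and with a further $x$-derivative loss since $v=-\partial_y^{-1}\partial_x u$, while the $g$-factor $\ww\partial_y \partial_x^j g$ feeds the $D$-sequence rather than the $X$-sequence.

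For $T_m$, both $v$ and $\ww \partial_x^{m-j} u$ are diagnostic. I place $\partial_x^j v$ in $L^\infty_{x,y}$ via \eqref{eq:v:infty:infty} (cost $\tt^{3/4}$ plus one extra $x$-derivative) and bound $\ww \partial_x^{m-j} u$ in $L^2_{x,y}$ by \eqref{eq:u:2:2}, which itself splits as a $\tt^{3/4}\|g\|_X^{1/2}\|g\|_D^{1/2}$ piece plus a $\tt^{1/2}\|g\|_X^{1/2}\|g\|_Z^{1/2}$ piece. After absorbing the $1/\tt$ prefactor and the $\tau^{-1/2}\|g\|_Y$ contribution generated by the Agmon step on $v$, these become the two summands on the right side of the stated estimate, with time weights $\tt^{1/2}$ and $\tt^{1/4}$ respectively. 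When $j$ is large the roles of $u$ and $v$ are swapped, with \eqref{eq:u:infty:2} providing the corresponding $L^\infty_x L^2_y$ bound on $u$ and \eqref{eq:v:2:infty} the $L^2_x L^\infty_y$ bound on $v$.

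The main obstacle is the combinatorial bookkeeping. To pass from the per-$m$ estimates to the stated sums one must check that the weights $\sqrt{m+1}/\sqrt{(j+1)(m-j+1)}$ produced by the identity above can be summed against $X$- and $Y$-sequences in each subrange. This is handled uniformly by the elementary inequality $\sqrt{m+1} \leq \sqrt{j+1} + \sqrt{m-j+1}$, which reduces each half of the split to the convolution of two $\ell^1$ sums, exactly as in the tangentially analytic framework of \cite{KukavicaVicol13a}. The universality of the constants $C$ is automatic since $\alpha \in [1/4,1/2]$ enters only through Lemma~\ref{lem:u:v:g}, whose constants are already universal.
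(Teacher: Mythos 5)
Your proposal is correct and follows essentially the same route as the paper: the same split at $j=[m/2]$, the same placement of $L^2_xL^\infty_y$ versus $L^\infty_{x,y}$ factors with an Agmon step on whichever factor needs $L^\infty_x$, and the same final pass to $\ell^1$ sums via the convolution inequality $\sum_m\sum_{j=0}^m a_j b_{m-j}\le(\sum_j a_j)(\sum_k b_k)$. The only cosmetic difference is that the paper does not state your combinatorial identity explicitly; it simply folds the (shifted-index) combinatorial factors, which on each half of the split are bounded by $\sqrt2$ because the $Y$-type norms carry the extra $(j+1)^{1/2}$ or $(m-j+1)^{1/2}$, into the universal constant when recasting the binomial sum in terms of $X_{m-j}$, $Y_{j+1}$, $D_j$, $Z_{m-j}$.
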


\begin{proof}
First, using \eqref{eq:uu:2:infty}--\eqref{eq:uu:infty:infty}, and the 1D Agmon inequality in the $x$ variable we obtain
\begin{align*}
\frac{|U_m|}{\|\ww\partial_x^mg\|_{L^2}}
&\leq   \sum_{j=0}^{[m/2]} \binom{m}{j} \|\partial_x^{m-j}u\|_{L_x^2L_y^{\infty}}\|\ww\partial_x^{j+1}g\|_{L_x^{\infty}L_y^2}  +\sum_{j=[m/2]+1}^{m} \binom{m}{j} \|\partial_x^{m-j}u\|_{L_{x,y}^{\infty}}\|\ww\partial_x^{j+1}g\|_{L_{x,y}^2} \notag\\
&\leq C\tt^{1/4} \sum_{j=0}^{[m/2]}\binom{m}{j} \|\ww\partial_x^{m-j}g\|_{L^2} 
\|\ww\partial_x^{j+1}g\|_{L^2}^{1/2}\|\ww\partial_x^{j+2}g\|_{L^2}^{1/2} \notag\\
&\ \ +C\tt^{1/4}  \sum_{j=[m/2]+1}^m\binom{m}{j}  \|\ww \partial_x^{m-j} g\|_{L^2}^{1/2} \|\ww \partial_x^{m-j+1} g\|_{L^2}^{1/2}  
\|\ww\partial_x^{j+1}g\|_{L^2}
\end{align*}
where $C>0$ is independent of $\alpha \in [1/4,1/2]$. Upon multiplying by $\tau^m M_m$ and using the definitions \eqref{eq:Xm}--\eqref{eq:Ym}, the above bound implies
\begin{align} 
\frac{|U_m| \tau^m M_m}{\|\ww \partial_x^m g\|_{L^2}} \leq  \frac{C\tt^{1/4}}{(\tau(t))^{1/2}} \left( \sum_{j=0}^{[m/2]}  X_{m-j} Y_{j+1}^{1/2} Y_{j+2}^{1/2}    
+ \sum_{j=[m/2]+1}^{m} X_{m-j}^{1/2} X_{m-j+1}^{1/2} Y_{j+1} \right).
\label{eq:g:m:U}
\end{align}

Similarly, by appealing to \eqref{eq:v:2:infty} and \eqref{eq:v:infty:infty} we have
\begin{align*}
\frac{|V_m|}{\|\ww\partial_x^mg\|_{L^2}}
&\leq \sum_{j=0}^{[m/2]} \binom{m}{j} \|\partial_x^{m-j}v\|_{L_x^2L_y^{\infty}}\|\ww\partial_y\partial_x^jg\|_{L_x^{\infty}L_y^2}  +\sum_{j=[m/2]+1}^{m} \binom{m}{j} \|\partial_x^{m-j}v\|_{L_{x,y}^{\infty}}\|\ww\partial_y\partial_x^jg\|_{L_{x,y}^2} \notag\\
&\leq C\tt^{3/4} \sum_{j=0}^{[m/2]}\binom{m}{j}\|\ww\partial_x^{m-j+1}g\|_{L^2}
\|\ww\partial_y\partial_x^jg\|_{L^2}^{1/2}\|\ww\partial_y\partial_x^{j+1}g\|_{L^2}^{1/2} \notag\\
&\ \ 
+C\tt^{3/4} \sum_{j=[m/2]+1}^m \binom{m}{j} \|\ww\partial_x^{m-j+1}g\|_{L^2}^{1/2}\|\ww\partial_x^{m-j+2}g\|_{L^2}^{1/2}
\|\ww\partial_y\partial_x^jg\|_{L^2} 
\end{align*}
where $C>0$ is independent of $\alpha \in [1/4,1/2]$. Upon multiplying by $\tau^m M_m$ and using the definitions \eqref{eq:Xm}--\eqref{eq:Ym}, the above bound implies
\begin{align} 
\frac{|V_m| \tau^m M_m}{\|\ww \partial_x^m g\|_{L^2}} \leq  \frac{C\tt^{3/4}}{(\tau(t))^{1/2}} \left( \sum_{j=0}^{[m/2]} Y_{m-j+1} D_j^{1/2} D_{j+1}^{1/2} 
+ \sum_{j=[m/2]+1}^m Y_{m-j+1}^{1/2} Y_{m-j+2}^{1/2} D_j  \right).
\label{eq:g:m:V}
\end{align}

For the last term on the right of  \eqref{eq:g:m} we appeal to \eqref{eq:u:2:2}, \eqref{eq:u:infty:2}, \eqref{eq:v:2:infty}, and \eqref{eq:v:infty:infty} to obtain
\begin{align*}
\frac{|T_m|}{\|\ww\partial_x^mg\|_{L^2}}
&\leq\frac{1}{2\tt}  \sum_{j=0}^{[m/2]} \binom{m}{j} \|\partial_x^jv\|_{L_{x,y}^{\infty}}\|\ww\partial_x^{m-j}u\|_{L_{x,y}^2} \notag\\
&\ \ +\frac{1}{2\tt}\sum_{j=[m/2]+1}^{m} \binom{m}{j} \|\partial_x^jv\|_{L_x^2L_y^{\infty}}\|\ww\partial_x^{m-j}u\|_{L_x^{\infty}L_y^2} \notag\\
&\leq  C\tt^{1/2} \sum_{j=0}^{[m/2]}\binom{m}{j}
\|\ww \partial_x^{j+1}g\|_{L_y^2}^{1/2}\| \ww\partial_x^{j+2}g\|_{L_y^2}^{1/2} 
\|\ww\partial_x^{m-j}g\|_{L^2}^{1/2}\| \ww \partial_y \partial_x^{m-j}g\|_{L^2}^{1/2} \notag\\
&\ \ + C\tt^{1/4} \sum_{j=0}^{[m/2]}\binom{m}{j}
\|\ww \partial_x^{j+1}g\|_{L_y^2}^{1/2}\| \ww\partial_x^{j+2}g\|_{L_y^2}^{1/2} 
\|\ww\partial_x^{m-j}g\|_{L^2}^{1/2}\|z \ww\partial_x^{m-j} g\|_{L^2}^{1/2} \notag\\
&\ \ +C\tt^{1/2}  \sum_{j=[m/2]+1}^m \binom{m}{j}
\|\ww\partial_x^{j+1}g\|_{L^2} 
\|\ww\partial_x^{m-j}g\|_{L^2}^{1/4}
\|\ww\partial_x^{m-j+1}g\|_{L^2}^{1/4} \notag\\
&\qquad \qquad \qquad \qquad \times
\|\ww\partial_x^{m-j}\partial_y g\|_{L^2}^{1/4}
\|\ww\partial_x^{m-j+1}\partial_y  g\|_{L^2}^{1/4}
\notag\\
&\ \ +C\tt^{1/4}  \sum_{j=[m/2]+1}^m \binom{m}{j}
\|\ww\partial_x^{j+1}g\|_{L^2} 
\|\ww\partial_x^{m-j}g\|_{L^2}^{1/4}
\|\ww\partial_x^{m-j+1}g\|_{L^2}^{1/4}\notag\\
&\qquad \qquad \qquad \qquad \times
\|z\ww\partial_x^{m-j}g\|_{L^2}^{1/4}
\|z\ww\partial_x^{m-j+1}g\|_{L^2}^{1/4}
\end{align*}
where $C>0$ is independent of $\alpha \in [1/4,1/2]$. Upon multiplying by $\tau^m M_m$ and using the definitions \eqref{eq:Xm}--\eqref{eq:Ym}, the above bound implies
\begin{align} 
\frac{|T_m| \tau^m M_m}{\|\ww \partial_x^m g\|_{L^2}} 
&\leq
\frac{C\tt^{1/2}}{(\tau(t))^{1/2}} \left( \sum_{j=0}^{[m/2]} Y_{j+1}^{1/2} Y_{j+2}^{1/2} X_{m-j}^{1/2} D_{m-j}^{1/2} + \sum_{j=[m/2]+1}^m Y_{j+1} X_{m-j}^{1/4} X_{m-j+1}^{1/4} D_{m-j}^{1/4} D_{m-j+1}^{1/4} \right) 
\notag\\
&\ + \frac{C\tt^{1/4}}{(\tau(t))^{1/2}} \left( \sum_{j=0}^{[m/2]}  Y_{j+1}^{1/2} Y_{j+2}^{1/2} X_{m-j}^{1/2} Z_{m-j}^{1/2} + \sum_{j=[m/2]+1}^m Y_{j+1} X_{m-j}^{1/4} X_{m-j+1}^{1/4} Z_{m-j}^{1/4} Z_{m-j+1}^{1/4} \right).
\label{eq:g:m:T}
\end{align}

The proof of the lemma is completed upon summing \eqref{eq:g:m:U}--\eqref{eq:g:m:T} over $m\geq 0$ and using the bound
\[
\sum_{m\geq 0} \sum_{j=0}^m a_j b_{m-j} \leq \sum_{j\geq 0} a_j   \sum_{k \geq 0} b_k 
\]
for positive sequences $\{a_j\}_{j\geq 0}$ and $\{b_j\}_{j\geq 0}$.
\end{proof}

\begin{remark}[\bf Analytic product estimates]
\label{rem:nonlinear}
We note that the proof of Lemma~\ref{lem:analytic} directly implies that the following bounds hold
\begin{align*} 
&\| \uu(g^{(1)}) \partial_x g^{(2)} \|_{X_{\tau,\alpha}} \leq \frac{C}{\tau(t)^{1/2}} \|g^{(1)}\|_{B_{\tau,\alpha}} \|g^{(2)}\|_{Y_{\tau,\alpha}} \\
&\| \vv(g^{(1)}) \partial_y g^{(2)} \|_{X_{\tau,\alpha}} \leq \frac{C}{\tau(t)^{1/2}} \|g^{(2)}\|_{B_{\tau,\alpha}} \|g^{(1)}\|_{Y_{\tau,\alpha}} \\
&\frac{1}{2\tt}\| \vv(g^{(1)}) \uu(g^{(2)}) \|_{X_{\tau,\alpha}} \leq \frac{C_0}{\tau(t)^{1/2}} \|g^{(2)}\|_{B_{\tau,\alpha}} \|g^{(1)}\|_{Y_{\tau,\alpha}} 
\end{align*}
for some universal constant $C>0$, independent of $\tau>0$ and $\alpha \in [1/4,1/2]$.
\end{remark}

\subsection{Conclusion of the a priori estimates}
\label{sec:conclusion}
At this stage we make a choice for the free parameters $\alpha,\beta$, and $\gamma$. First, we introduce 
\begin{align*} 
\delta = \delta(\eps) \in (\eps, 1/10)
\end{align*}
which is to be chosen at the end of the proof, where without loss of generality $\eps \leq 1/200$. We set
\begin{align*}
\alpha = \frac{1-\delta}{2}, \qquad  \beta = \gamma = \frac{\delta}{2}.
\end{align*}
With this choice of $\alpha,\beta,\gamma$, we sum estimate \eqref{eq:g:m:*} for $m\geq 0$, appeal to Lemmas~\ref{lem:LEFT} and~\ref{lem:analytic}, and arrive at
\begin{align*} 
&\frac{d}{dt} \|g\|_{X_{\tau,\alpha}} + \frac{5/4-\delta}{\tt} \|g\|_{X_{\tau,\alpha}} \notag\\
&\  
+\frac{\delta}{C_1 \tt^{5/4}}
\left( 2 \tt^{1/4} \|  g\|_{X_{\tau,\alpha}} +\tt^{1/4} \|g\|_{Z_{\tau,\alpha}} + \tt^{1/4} \|g\|_{\tilde Z_{\tau,\alpha}}+ \tt^{3/4} \|g\|_{D_{\tau,\alpha}} + \tt^{5/4} \|g\|_{\tilde D_{\tau,\alpha}} \right) 
\notag\\
&\ \  \leq \Bigg( \dot{\tau}(t) + \frac{C_0}{\tau(t)^{1/2}} \left( \tt^{1/4}  \|g\|_{X_{\tau,\alpha}} +   \tt^{1/4}   \| g\|_{Z_{\tau,\alpha}} +  \tt^{3/4}  \| g\|_{D_{\tau,\alpha}} \right)  \Bigg) \|g\|_{Y_{\tau,\alpha}}
\end{align*}
for some sufficiently large universal constants $C_0,C_1\geq 1$ which are independent of $\alpha$ and $\delta$. 
Upon recalling the notations \eqref{eq:B:tau} and \eqref{eq:tilde:B:tau}, we can rewrite the above in a more compact form as 
\begin{align} 
\frac{d}{dt} \|g\|_{X_{\tau,\alpha}} +\frac{5/4-\delta}{\tt} \|g\|_{X_{\tau,\alpha}} 
+\frac{\delta}{C_1 \tt^{5/4}} \left( \|g\|_{B_{\tau,\alpha}} + \|g\|_{\tilde B_{\tau,\alpha}} \right)
  \leq \left( \dot{\tau}(t) + \frac{C_0}{\tau(t)^{1/2}} \|g\|_{B_{\tau,\alpha}}  \right) \|g\|_{Y_{\tau,\alpha}}
\label{eq:X:1}
\end{align}
with $C_0, C_1 \geq 1$ are universal constants, that are in particular {\em independent} of the choice of $\delta \in (\eps,1/10)$. 

We next choose the function $\tau(t)$ such that 
\begin{align} 
\frac{d}{dt} (\tau(t))^{3/2} + 3  C_0   \|g(t)\|_{B_{\tau(t),\alpha}}  =  0.
\label{eq:tau:1}
\end{align}
The above ODE is meant to hold a.e. in time, since the time derivative of the monotone decreasing absolutely continuous function (in fact H\"older $1/2$ continuous) is only guaranteed to exist almost everywhere. With this choice of $\tau$ in \eqref{eq:tau:1}, we infer from the a priori estimate \eqref{eq:X:1} that
\begin{align*} 
\frac{d}{dt} \left( \tt^{5/4-\delta} \|g\|_{X_{\tau,\alpha}} \right)
+ \frac{\delta}{C_1 \tt^\delta} \left( \|g\|_{B_{\tau,\alpha}} + \|g\|_{\tilde B_{\tau,\alpha}} \right) 
+ \frac{C_0 \tt^{5/4-\delta}}{\tau(t)^{1/2}} \|g\|_{B_{\tau,\alpha}}   \|g\|_{Y_{\tau,\alpha}}
\leq 0
\end{align*}
which integrated on $[0,t]$ yields 
\begin{align} 
&\tt^{5/4-\delta} \|g\|_{X_{\tau(t),\alpha}} 
+ \frac{\delta}{C_1} \int_0^t \frac{1}{\ss^\delta} \left( \|g(s)\|_{B_{\tau(s),\alpha}} + \|g(s)\|_{\tilde B_{\tau(s),\alpha}} \right) ds  \notag\\
&\qquad +C_0 \int_0^t \frac{\ss^{5/4-\delta}}{\tau(s)^{1/2}} \|g(s)\|_{Y_{\tau(s),\alpha}}  \|g(s)\|_{B_{\tau(s),\alpha}} ds  \notag \\
&\quad  \leq \|g_0\|_{X_{\tau_0,\alpha}}   \leq \|g_0\|_{X_{\tau_0,1/2}} \leq \eps.
\label{eq:X:3}
\end{align}
From \eqref{eq:X:3} it immediately follows that 
\begin{align*} 
\int_0^t   \|g(s)\|_{B_{\tau(s),\alpha}} ds 
\leq   \frac{\eps C_1}{\delta}\tt^\delta 
\end{align*}
which combined with \eqref{eq:tau:1}
shows that we have the lower bound 
\begin{align} 
\tau(t)^{3/2} \geq \tau_0^{3/2} -   \frac{\eps C_2}{2 \delta} \tt^{\delta} 
\label{eq:tau:2}
\end{align}
for all $t\geq 0$, where $C_2 =  6 C_0  C_1 $ is a universal constant that is independent of $\delta$.
From estimate \eqref{eq:tau:2} we see that the radius of tangential analyticity obeys
\[
\tau(t) \geq \frac{\tau_0}{2}
\] 
on the time interval $[0,T_\eps]$, where 
\begin{align} 
\langle T_\eps \rangle^{\delta} = \frac{\delta \tau_0^{3/2}}{\eps C_2}
\label{eq:T:eps:1}
\end{align}
and we recall that $\delta = \delta(\eps) \in (\eps, 1/10)$ is yet to be chosen.

In order to see that the monotone decreasing analyticity radius is a H\"older $1/2$ continuous function of time, we may use the bound
\eqref{eq:B:tilde:B},
integrate \eqref{eq:tau:1} from $t_1$ to $t_2$, where $0\leq t_1 < t_2 \leq T_\eps$ are arbitrary, and use the estimate \eqref{eq:X:3}, to obtain
\begin{align*} 
\tau(t_1)^{3/2} - \tau(t_2)^{3/2}
&\leq 6 C_0 \int_{t_1}^{t_2} \tt^{1/8} \|g\|_{\tilde B_{\tau,\alpha}}^{1/2} \|g\|_{X_{\tau,\alpha}}^{1/2} \notag\\
&\leq 6 C_0 \sup_{t \in [0,T_\eps]} \left( \tt^{5/4-\delta} \|g\|_{X_{\tau,\alpha}}\right)^{1/2} \left( \int_{t_1}^{t_2} \tt^{-\delta} \|g\|_{\tilde B_{\tau,\alpha}} dt\right)^{1/2}  \left( \int_{t_1}^{t_2} \tt^{2\delta-1}  dt\right)^{1/2} \notag\\
&\leq \frac{6 C_0 \sqrt{C_1} \eps}{ \sqrt{\delta}} (t_2-t_1)^{1/2} 
\end{align*}
by using that $2\delta- 1 \leq 0$.

To conclude the proof, we let
\begin{align} 
\delta = {\eps} \log \frac{1}{\eps}
\label{eq:delta:def}
\end{align}
which is a permissible choice if $\eps$ is sufficiently small. In that case, from \eqref{eq:T:eps:1} we obtain
\begin{align} 
T_\eps = \left( \frac{\tau_0^{3/2} \log \frac 1 \eps}{ C_2}\right)^{\frac{1}{\eps \log \frac 1\eps}} - 1 = \exp\left( \frac{1}{\eps \log \frac 1 \eps} \log \left( \frac{\tau_0^{3/2} \log \frac 1 \eps}{C_2} \right) \right) - 1.
\label{eq:T:eps:2}  
\end{align}
It is clear from \eqref{eq:T:eps:2} that
as long  as 
$\tau_0^{3/2} \log \frac 1\eps\geq C_2  e^2 $, which is ensured by~\eqref{eq:eps:tau:0},
then we have that 
\begin{align*} 
T_\eps \geq \exp\left( \frac{1}{\eps \log \frac 1 \eps} \right)
\end{align*}
for all $0 < \eps \leq 1/200$, which concludes the proof of the a priori estimates.

\section{Uniqueness}
\label{sec:uniqueness}

Assume $g_0\in X_{2\tau_0,\alpha}$ with $\Vert g_0\Vert_{X_{2\tau_0},\alpha}\le \eps$. 
Let $g^{(1)}$ and $g^{(2)}$ be two solutions to the system \eqref{eq:g:1}--\eqref{eq:g:3} evolving from $g_0$, with tangential radii of analyticity $\tau^{(1)}$ and $\tau^{(2)}$ respectively, which obey the bounds in Theorem~\ref{thm:rigorous}. 
We fix $\delta$ as given by \eqref{eq:delta:def}.

Also, define $\tau(t)$ by
\begin{align}
\dot{\tau}(t)+\frac{2C_0}{\tau(t)^{1/2}}\Vert g^{(1)}(t)\Vert_{B_{\tau^{(1)}(t)}}=0, \qquad \tau(0)=\frac{\tau_0}{4}.
\label{eq:diff:tau:0}
\end{align}
In view of the estimate \eqref{eq:g:bnd:2} for $g^{(1)}$ and the lower bounds \eqref{eq:g:bnd:0} for $\tau^{(1)}$ and $\tau^{(2)}$, we have that 
\begin{align}
\frac{\tau_0}{8} \leq \tau(t) \leq \frac{\tau_0}{4} \leq \frac{\min\{\tau^{(1)},\tau^{(2)}\}}{2}
\label{eq:diff:tau:1}
\end{align}
for all $t\in [0,T_\eps]$.

We consider the difference of solutions $\bar g=g^{(1)}-g^{(2)}$ which obeys 
\begin{align}
&\partial_t \bar g-\partial_y^2 \bar g+\kappa\phi\partial_x \bar g+\frac{1}{\tt}\bar g\notag\\
&\qquad=-(u^{(1)}\partial_x \bar g+\bar u\partial_x g^{(2)}) - (\bar v\partial_y g^{(1)}+v^{(2)}\partial_y \bar g)
+\frac{1}{2\tt}(\bar v u^{(1)}+v^{(2)} \bar u)
\label{eq:diff:1}
\end{align}
and has initial datum $\bar g_0=0$.
Here we also denote $\bar u=u^{(1)}-u^{(2)}=\uu(\bar g)$ and $\bar v=v^{(1)}-v^{(2)}=\vv(\bar g)$. 

Using estimates for the nonlinear terms as in Remark~\ref{rem:nonlinear}, similarly to \eqref{eq:X:1} we arrive at
\begin{align}
&\frac{d}{dt}\Vert \bar g(t)\Vert_{X_{\tau(t)}}+\frac{5/4-\delta}{\tt}\Vert \bar g(t)\Vert_{X_{\tau(t)}}
+\frac{\delta}{C_1\tt^{5/4}}\Vert \bar g(t)\Vert_{B_{\tau(t)}}\notag\\
&\quad\le 
\left(\dot{\tau}(t)+ \frac{2 C_0}{\tau(t)^{1/2}}\Vert g^{(1)}(t)\Vert_{B_{\tau(t)}} \right)\Vert \bar g(t)\Vert_{Y_{\tau(t)}} +
\frac{2 C_0}{\tau(t)^{1/2}}\Vert g^{(2)}(t)\Vert_{Y_{\tau(t)}}\Vert \bar g(t)\Vert_{B_\tau(t)}
\label{eq:diff:2}
\end{align} 
with $C_0,C_1\geq 1$ being universal constants. Since $\tau(t) \leq \tau^{(1)}(t)$ and the $X_{\tau,\alpha}$ norm is increasing in $\tau$, we obtain from \eqref{eq:diff:tau:0} that 
\[
\dot{\tau}(t)+ \frac{2 C_0}{\tau(t)^{1/2}}\Vert g^{(1)}(t)\Vert_{B_{\tau(t)}} \leq 0.
\]
On the other hand, using \eqref{eq:Y:X:bound}, \eqref{eq:diff:tau:1}, and \eqref{eq:g:bnd:1} we may bound
\[
\Vert g^{(2)}(t)\Vert_{Y_{\tau(t)}} \leq \frac{1}{\tau(t)} \Vert g^{(2)}(t)\Vert_{X_{2\tau(t)}} \leq \frac{1}{\tau(t)} \Vert g^{(2)}(t)\Vert_{X_{\tau^{(1)}(t)}}  \leq \frac{\eps}{\tt^{5/4-\delta} \tau(t)}.
\]
Combining the above two estimates with \eqref{eq:diff:2} we arrive at
\begin{align}
\frac{d}{dt}\Vert \bar g(t)\Vert_{X_{\tau(t)}}+\frac{5/4-\delta}{\tt}\Vert \bar g(t)\Vert_{X_{\tau(t)}}
+\frac{\delta}{C_1\tt^{5/4}}\Vert \bar g(t)\Vert_{B_{\tau(t)}}
\leq \frac{2 \eps C_0 \tt^{\delta}}{\tt^{5/4} \tau(t)^{3/2}} \| \bar g(t)\|_{B_\tau(t)}
\label{eq:diff:3}.
\end{align} 
To conclude we note that by the definition of $T_\eps$ in \eqref{eq:T:eps:1} we have that
\begin{align}
 \frac{\delta}{C_1} =\frac{\eps \log \frac 1 \eps}{C_1}  \geq \frac{32 \eps C_0 \tt^{\delta}}{\tau_0^{3/2}} \geq \frac{2 \eps C_0 \tt^{\delta}}{\tau(t)^{3/2}} 
 \label{eq:diff:4}
\end{align}
holds. 
From \eqref{eq:diff:3} and \eqref{eq:diff:4} we obtain
\begin{align*}
\frac{d}{dt}\Vert \bar g(t)\Vert_{X_{\tau(t)}}+\frac{5/4-\delta}{\tt}\Vert \bar g(t)\Vert_{X_{\tau(t)}}
\leq 0
\end{align*} 
which concludes the proof of uniqueness since $\bar g_0 = 0$.

\section{Existence}
\label{sec:existence}

Throughout this section we fix $\alpha = 1/2 - \delta$, where $\delta = \eps \log \frac{1}{\eps}$. We assume the initial datum $g_0$ obeys $\|g_0\|_{X_{2\tau_0,1/2}} \leq \eps$, where the pair $(\tau_0,\eps)$ obeys \eqref{eq:eps:tau:0}. 

We first prove the existence of solutions $g^{(\nu)}$ to a parabolic approximation of the Prandtl equations, with the term $-\nu \partial_x^2 g$ present on the left side of \eqref{eq:g:1}. These solutions are shown to obey uniform in $\nu$ bounds in $L^\infty_t X_{\tau^{(\nu)}(t),\alpha} \cap L^1_t B_{\tau^{(\nu)}(t),\alpha}$ for a sequence of tangential analyticity radii $\tau^{(\nu)}$. These radii obey $\tau^{(\nu)}\geq \tau_0/2$ for all $t \in [0,T_\eps]$ and are moreover uniformly equicontinuous on this time interval, where $T_\eps$ is given by \eqref{eq:T:eps:1}, i.e.
\begin{align}
\langle T_\eps \rangle^\delta = \frac{\tau_0^{3/2} \log \frac 1 \eps}{K_*} 
\label{eq:T:eps:*}
\end{align}
 for a sufficiently large universal constant $K_*$. Moreover, $g^{(\nu)}$ and $\tau^{(\nu)}$ are shown to obey \eqref{eq:tau:1}.
 
With these uniform in $\nu$ bounds we then show that the $\tau^{(\nu)}$ converge along a subsequence to an analyticity radius $\tau(t) \geq \tau_0/2$ on $[0,T_\eps]$, and along this subsequence, the $g^{(\nu)}$ are shown to be a Cauchy sequence in the topology induced by $L^\infty_t X_{\tau_0,\alpha} \cap L^1_t B_{\tau_0,\alpha}$. By the completeness of $L^\infty( L^2(\theta_\alpha(t,y) dy dx) dt)$ the existence of solutions to Prandtl in the sense of Definition~\ref{def:sol} is then completed.

\subsection{A dissipative approximation}
For $\nu>0$ we consider
the nonlinear parabolic equation
\begin{align} 
&\partial_t \gnu - \partial_y^2 \gnu - \nu \partial_x^2 \gnu + (\unu + \kappa \phi) \partial_x \gnu  + \vnu \partial_y \gnu + \frac{1}{\tt} \gnu - \frac{1}{2\tt} \vnu \unu  = 0   \label{eq:gnu:1}\\
&\partial_y \gnu|_{y=0}= \gnu|_{y=\infty}  \label{eq:gnu:2}\\
&\unu(y) = \uu(\gnu) := \theta_{-1}(y) \int_0^y \gnu(\bar y) \theta_1(\bar y) d\bar y \label{eq:gnu:3}\\
&\vnu(y) = \vv(\gnu) := - \int_0^y \partial_x \unu(\bar y) d \bar y  \label{eq:gnu:4}.
\end{align}
Our goal is to construct solutions $g^{(\nu)}$
with corresponding tangential analyticity radii $\tau^{(\nu)}$, so that uniformly in $\nu >0$ we have the estimate
\begin{align}
&\sup_{t \in [0,T_\eps]} \left(\tt^{5/4-\delta} \|\gnu\|_{X_{\tau^{(\nu)}(t),\alpha}} \right)
+ \frac{\delta}{K_*} \int_0^{T_\eps} \frac{1}{\ss^\delta}\| \gnu(s)\|_{B_{\tau^{(\nu)}(s),\alpha}}  ds  
\notag \\
& \qquad+ K_* \int_0^{T_\eps} \frac{\ss^{5/4-\delta}}{\tau(s)^{1/2}} \|\gnu(s)\|_{Y_{\tau^{(\nu)}(s),\alpha}} \| \gnu(s)\|_{B_{\tau^{(\nu)}(s),\alpha}} ds   \leq 4 \eps,
\label{eq:gnu:bnd} 
\end{align}
where $K_*>0$ is a sufficiently large universal constant, and the radii $\tau^{(\nu)}(t)$ obey the ODE
\begin{align}
\frac{d}{dt} \tau^{(\nu)}(t) + \frac{2 K_*}{\tau^{(\nu)}(t)^{1/2}} \| \gnu(t)\|_{B_{\tau^{(\nu)}(t),\alpha}} = 0, \qquad 
\tau^{(\nu)}(0)=\tau_0.
\label{eq:tau:nu}
\end{align}
For $\nu>0$, estimate \eqref{eq:gnu:bnd} and the ODE \eqref{eq:tau:nu}, correspond to \eqref{eq:X:3} respectively \eqref{eq:tau:1} for the limiting Prandtl system $\nu =0$. 
Although the system \eqref{eq:gnu:1}--\eqref{eq:gnu:4} is parabolic, we detail the construction of $g^{(\nu)}$ and $\tau^{(\nu)}$ since the first order ODE \eqref{eq:tau:nu} has a nonlinear term which convergences only once the radius $\tau^{(\nu)}$ has been constructed already to satisfy this equation. 
The method of constructing $g^{(\nu)}$ and $\tau^{(\nu)}$ draws from ideas employed~\cite{KukavicaTemamVicolZiane11,IgnatovaKukavicaZiane12} for the hydrostatic Euler equations. 

At this stage it is convenient to introduce some notation. Let $N \geq 1$.
Similarly to \eqref{eq:Xm}--\eqref{eq:tilde:B:tau}, for $h \colon \HH \to \RR$ and $\tau >0$ define the weighted Sobolev norms
\begin{alignat}{2}
&\|h\|_{X^{N}_\tau} = \sum_{m=0}^N X_{m}(h,\tau), 
\qquad 
& &
\label{eq:XmN} \\
&\|h\|_{D^{N}_\tau}= \sum_{m=0}^N D_{m}(h,\tau), 
\qquad
& & \|h\|_{\tilde D^{N}_\tau} = \sum_{m=0}^N \frac{D_{m}(h,\tau)^2}{X_{m}(h,\tau)} = \sum_{m=0}^N \tilde D_m(h,\tau),
\notag \\
&\|h\|_{Z^{N}_\tau} = \sum_{m=0}^N Z_{m}(h,\tau), 
\qquad
& & \|h\|_{\tilde Z^{N}_\tau} = \sum_{m=0}^N \frac{Z_{m}(h,\tau)^2}{X_{m}(h,\tau)} = \sum_{m=0}^N \tilde Z_m(h,\tau),
\notag  \\
&\|h\|_{Y^{N}_\tau} = \sum_{m=1}^N Y_{m}(h,\tau),
\qquad 
& & \|h\|_{\tilde Y^{N}_\tau} = \sum_{m=1}^N \frac{Y_{m}(h,\tau)^2}{X_{m-1}(h,\tau)},
\notag  \\
& \|h\|_{B^{N}_\tau} =\sum_{m=0}^N B_{m}(h,\tau), 
\qquad
& &  \|h\|_{\tilde B^{N}_\tau} =\sum_{m=0}^N \tilde B_{m}(h,\tau). \label{eq:tildeBmN}
\end{alignat}
We will use frequently that the bound
\begin{align*} 
\|h\|_{B^{N}_\tau}^2 \leq 3 \tt^{1/4} \|h\|_{X^{N}_\tau}  \|h\|_{\tilde B^{N}_\tau} 
\end{align*}
holds independently of $N\geq 1$ and $\tau>0$.

\subsection{A two-step Picard iteration for the dissipative system}
We define
\begin{align*}
S^{(\nu)}(t) h_0 = h^{(\nu)}(t)
\end{align*}
to be the solution to the initial value problem to the linear part of \eqref{eq:gnu:1}--\eqref{eq:gnu:4}, namely
\begin{align}
&\partial_t  h^{(\nu)} - \partial_y^2 h^{(\nu)} - \nu \partial_x^2 h^{(\nu)} + \kappa \phi \partial_x h^{(\nu)} + \frac{1}{\tt} h^{(\nu)} = 0   \label{eq:hnu:1}\\
&\partial_y h^{(\nu)}|_{y=0} = 0 = h^{(\nu)}|_{y=\infty}  
\label{eq:hnu:2}\\
&h^{(\nu)}|_{t=0} = h_0 
\label{eq:hnu:3}.
\end{align}
Solving \eqref{eq:hnu:1}--\eqref{eq:hnu:3} on $\HH$ with the Neumann boundary condition \eqref{eq:hnu:2} at $y=0$ may be done using an even extension across $y=0$ and solving the problem \eqref{eq:hnu:1} on $\RR^2$ with vanishing boundary conditions as $|y|\to \infty$. As such, an explicit solution formula for $S^{(\nu)}(t)$ may be obtained, though it will not be essentially used here. We note that if $h_0$ obeys the boundary condition \eqref{eq:hnu:2}, the solutions  $S^{(\nu)}(t) h_0$ automatically lie in $\HCal_{2,1,\beta}$ for any $\beta < 1$ (cf.~Definition~\ref{def:sol}).

Next, we set up a two-step Picard iteration scheme. For $n=0,1$ we let
\begin{align*}
g^{(0,\nu)}(t) = g^{(1,\nu)}(t) = S^{(\nu)}(t) g_0 
\end{align*}
while for $n\geq 2$ we define $g^{(n,\nu)}$ to be the mild solution (obtained by the Duhamel formula for the semigroup $S^{(\nu)}$) of the linear  initial value problem
\begin{align} 
&\partial_t g^{(n,\nu)}- \partial_y^2 g^{(n,\nu)} - \nu \partial_x^2 g^{(n,\nu)} + \kappa \phi \partial_x g^{(n,\nu)} + \frac{1}{\tt} g^{(n,\nu)} \notag\\
&\qquad = - \uu(g^{(n-2,\nu)}) \partial_x g^{(n-1,\nu)} - \vv(g^{(n-1,\nu)}) \partial_y g^{(n-2,\nu)}  + \frac{1}{2\tt} \vv(g^{(n-1,\nu)}) \uu(g^{(n-2,\nu)}) \label{eq:Picard:1}\\
&\partial_y g^{(n,\nu)}|_{y=0} = 0 =g^{(n,\nu)}|_{y=\infty}  \label{eq:Picard:2}\\
&g^{(n,\nu)}|_{t=0} = g_0 \label{eq:Picard:3}.
\end{align}
The pairing of $g^{(n-1,\nu)}$ and $g^{(n-2,\nu)}$ in \eqref{eq:Picard:1} is motivated by the bounds guaranteed by Remark~\ref{rem:nonlinear}.

\subsection{Sobolev bounds and convergence of the Picard iteration}
Let $N$ be an integer such that 
$
N \geq \frac{1}{\nu}.
$
For the remainder of this subsection we fix this value of $N$ and we shall ignore the $\nu$ and $N$ indices for $g$ and $\tau$. 
We claim that there exists $T_{\eps,N}>0$, to be chosen later, and a sequence of absolutely continuous monotone decreasing functions
\begin{align}
\tau^{(n)} = \tau^{(n)}_N \colon [0,T_{\eps,N}] \to \left[\frac{5 \tau_0}{4} , \frac{ 7\tau_0}{4}\right]
\label{eq:tau:n:bnd}
\end{align}
with $\tau^{(n)}(0) = 7 \tau_0/4$
such that the bound
\begin{align}
&\sup_{[0,T_{\eps,N}]} \left( \tt^{5/4-\delta} \| g^{(n)}(t)\|_{X_{\tau^{(n)}(t)}^{N}} \right)
+ \frac{\delta}{K} \int_0^{T_{\eps,N}} \frac{1}{\ss^\delta} \left( \| g^{(n)}(s)\|_{B_{\tau^{(n)}(s)}^{N}} +   \| g^{(n)}(s)\|_{\tilde B_{\tau^{(n)}(s)}^{N}} \right) ds  \notag \\
&\  + \frac{8^N K}{\tau_0^{1/2}} \int_0^{T_{\eps,N}}  \ss^{5/4-\delta} \left( \|g^{(n-1)}(s)\|_{B_{\tau^{(n)}(s)}^{N}} + \|g^{(n-1)}(s)\|_{\tilde B_{\tau^{(n)}(s)}^{N}} \right) \|g^{(n)}\|_{Y_{\tau^{(n)}(s)}^N} ds \notag\\
&\ +\frac{\nu}{4} \int_0^{T_{\eps,N}} \ss^{5/4-\delta} \|g^{(n)}(s)\|_{\tilde Y^{N+1}_{\tau^{(n)}(s)}} ds \notag\\
&\leq 2 \eps \label{eq:cc} 
\end{align}
holds for all $n \geq 1$, and some universal constant $K\geq 1$.

We prove \eqref{eq:cc} inductively on $n$. For $n=1$ this bound follows immediately from the assumption $\|g_0\|_{X_{2\tau_0,1/2}} \leq \eps$, and the dissipativity of $S^{(\nu)}$.
In order to prove the induction step we proceed as follows.
Since $M_{m-1}/2 \leq m M_m \leq 2 M_{m-1}$ for all $m\geq 1$, and there are no boundary terms when integrating by parts in $x$, for all $m\geq 0$ one may use Remark~\ref{rem:nonlinear} to derive an estimate similar to \eqref{eq:X:1} for the system \eqref{eq:Picard:1}--\eqref{eq:Picard:3} which is
\begin{align}
&\frac{d}{dt} \|g^{(n)}\|_{X^{N}_{\tau^{(n)}}} + \frac{5/4-\delta}{\tt} \|g^{(n)}\|_{X^{N}_{\tau^{(n)}}} 
+ \frac{\delta}{K \tt^{5/4}} \left( \|g^{(n)}\|_{B^{N}_{\tau^{(n)}}} + \|g^{(n)}\|_{\tilde B^{N}_{\tau^{(n)}}} \right) \notag\\
&\quad + \frac{8^N K}{\tau_0^{1/2}} \left( \|g^{(n-1)}\|_{B^{N}_{\tau^{(n)}}} +  \|g^{(n-1)}\|_{\tilde B^{N}_{\tau^{(n)}}} \right) \|g^{(n)}\|_{Y_{\tau^{(n)}}^{N}}  
+\frac{\nu}{4} \|g^{(n)}\|_{\tilde Y^{N+1}_{\tau^{(n)}}} 
\notag\\
& \leq \left(\dot \tau^{(n)}  + \frac{8^N K}{\tau_0^{1/2}}  \left( \|g^{(n-1)}\|_{B^{N}_{\tau^{(n)}}} + \|g^{(n-1)}\|_{\tilde B^{N}_{\tau^{(n)}}} \right) \right) \|g^{(n)}\|_{Y_{\tau^{(n)}}^{N}} 
\notag\\
&\quad +  \frac{K}{(\tau^{(n)})^{1/2}} \|g^{(n-2)}\|_{B^{N}_{\tau^{(n)}}} \|g^{(n-1)}\|_{Y^{N}_{\tau^{(n)}}}  + \frac{K}{(\tau^{(n)})^{1/2}} \|g^{(n-2)}\|_{B^{N}_{\tau^{(n)}}} Y_{N+1}(g^{(n-1)},\tau^{(n)}) 
\notag\\
& \leq \left(\dot \tau^{(n)}  + \frac{12^N K}{\tau_0^{1/2}}  \left( \|g^{(n-1)}\|_{B^{N}_{\tau^{(n-1)}}} + \|g^{(n-1)}\|_{\tilde B^{N}_{\tau^{(n-1)}}} 
\right) \right) \|g^{(n)}\|_{Y_{\tau^{(n)}}^{N}}  
\notag\\
&\quad +  \frac{2^{N} K}{\tau_0^{1/2}} \|g^{(n-2)}\|_{B^{N}_{\tau^{(n-1)}}} \|g^{(n-1)}\|_{Y^{N}_{\tau^{(n-1)}}} 
 + \frac{2^{N} K}{\tau_0^{1/2}} \|g^{(n-2)}\|_{B^{N}_{\tau^{(n-1)}}} Y_{N+1}(g^{(n-1)},\tau^{(n-1)}) 
\label{eq:constr:1}
\end{align}
for all $n\geq 2$, 
where $K$ is a sufficiently large universal constant (in particular $\delta,N,n,\tau$-independent). In the second inequality in \eqref{eq:constr:1} we have used several times that cf.~\eqref{eq:tau:n:bnd} 
we have
\[
\max_{0 \leq |j| \leq N} \left( \frac{\tau^{(n)}}{\tau^{(n-1)}} \right)^{j} \leq \max_{0 \leq |j| \leq N} \left(\frac{7}{5}\right)^{j} \leq 2^{N/2}.
\]
The main difficulty lies in obtaining a $\nu-$independent bound for the last term on the right side of \eqref{eq:constr:1}. First we notice that since 
\[
\frac{X_m(h,\tau)}{\tau} = \frac{Y_m(h,\tau)}{m} 
\]
and $N \nu \geq 1$ we may estimate
\begin{align}
&\frac{2^{N+2} K}{\tau_0^{1/2}} \|g^{(n-2)}\|_{B^{N}_{\tau^{(n-1)}}} Y_{N+1}(g^{(n-1)},\tau^{(n-1)}) \notag\\
&\leq \frac{\nu}{8} \frac{(Y_{N+1}(g^{(n-1)},\tau^{(n-1)})^2}{X_N(g^{(n-1)},\tau^{(n-1)})} 
+ \frac{4^{N+3} K^2}{\nu \tau^{(n-1)}} \|g^{(n-2)}\|_{B^{N}_{\tau^{(n-1)}}}^2 X_N(g^{(n-1)},\tau^{(n-1)}) \notag\\
&\leq \frac{\nu}{8} \|g^{(n-1)}\|_{\tilde Y^{N+1}_{\tau^{(n-1)}}} + \frac{4^{N+4} K^2}{\nu N} \|g^{(n-2)}\|_{B^{N}_{\tau^{(n-1)}}}^2 Y_N(g^{(n-1)},\tau^{(n-1)})  \notag\\
&\leq \frac{\nu}{8} \|g^{(n-1)}\|_{\tilde Y^{N+1}_{\tau^{(n-1)}}} + \frac{4^{N+5} K^2 \tt^{1/4}}{\nu N} \|g^{(n-2)}\|_{X^N_{\tau^{(n-1)}}} \|g^{(n-2)}\|_{\tilde B^{N}_{\tau^{(n-1)}}} \| g^{(n-1)}\|_{Y^N_{\tau^{(n-1)}}} \notag\\
&\leq \frac{\nu}{8} \|g^{(n-1)}\|_{\tilde Y^{N+1}_{\tau^{(n-1)}}} + 8^{N+4} K^2 \tt^{1/4} \|g^{(n-2)}\|_{X^N_{\tau^{(n-2)}}} \|g^{(n-2)}\|_{\tilde B^{N}_{\tau^{(n-1)}}} \| g^{(n-1)}\|_{Y^N_{\tau^{(n-1)}}} 
\label{eq:constr:2}.
\end{align}
At this stage, for $n \geq 1$ we chose $\tau^{(n)}$ to solve the first order ODE
\begin{align}
\dot \tau^{(n)}  + \frac{12^N K}{\tau_0^{1/2}}  \left( \|g^{(n-1)}\|_{B^{N}_{\tau^{(n-1)}}} + \|g^{(n-1)}\|_{\tilde B^{N}_{\tau^{(n-1)}}} \right) = 0, \qquad \tau^{(n)}(0) = \frac{7 \tau_0}{4}.
\label{eq:tau:n:ODE}
\end{align}
The key point here is that by the induction step, the functions $g^{(n-1)}$ and $\tau^{(n-1)}$ are known, and due to the estimates \eqref{eq:cc} we have that 
\begin{align}
\int_0^t \|g^{(n-1)}(s)\|_{B^{N}_{\tau^{(n-1)}(s)}} + \|g^{(n-1)}(s)\|_{\tilde B^{N}_{\tau^{(n-1)}(s)}} ds \leq \frac{2\eps K \tt^\delta}{\delta} < \infty
\label{eq:tau:n:derivative}
\end{align}
for all $t \in [0,T_{\eps,N}]$.
Thus the existence of an absolutely continuous solution $\tau^{(n)}$ to \eqref{eq:tau:n:ODE} is immediate. For $n=0$ we may simply let $\tau^{(0)}(t) = 7 \tau_0/4$. Moreover, from \eqref{eq:tau:n:derivative} we have that \eqref{eq:tau:n:bnd} holds at least on $[0,T_{\eps,N}]$, with $T_{\eps,N}$ defined by
\begin{align}
\tau_0^{3/2} = \frac{12^{N+1} K^2 }{\log \frac{1}{\eps}} \langle T_{\eps,N} \rangle^{\delta}.
\label{eq:tau:eps:n}
\end{align}
In fact we will a-posteriori show that the time interval can be chosen independently of $N \geq 1/\nu$, as the factor $12^N$ is superfluous. For the moment however, the bound \eqref{eq:tau:eps:n} is good enough since it is independent of $n \geq 0$ (recall that for now $N$ is fixed).

We now combine the bounds \eqref{eq:constr:1} and \eqref{eq:constr:2} with the choice for $\tau^{(n)}$ made in \eqref{eq:tau:n:ODE}, integrate on $[0,t]$, and use the induction assumption (via the bounds \eqref{eq:cc}) to obtain that
\begin{align}
&\tt^{5/4-\delta} \|g^{(n)}(t)\|_{X_{\tau^{(n)}(t)}^{N}} 
+ \frac{\delta}{K} \int_0^t \frac{1}{\ss^\delta}  
\left( \| g^{(n)}(s)\|_{B_{\tau^{(n)}(s)}^{N}}+ \| g^{(n)}(s)\|_{\tilde B_{\tau^{(n)}(s)}^{N}} \right) ds  
\notag\\
&\quad + \frac{8^N K}{\tau_0^{1/2}} \int_0^t \ss^{5/4-\delta} \left( \|g^{(n-1)}(s)\|_{B^{N}_{\tau^{(n)}(s)}} +  \|g^{(n-1)}(s)\|_{\tilde B^{N}_{\tau^{(n)}(s)}} \right) \|g^{(n)}(s)\|_{Y_{\tau^{(n)}(s)}^{N}} ds 
\notag\\
&\quad + \frac{\nu}{4} \int_0^t \ss^{5/4-\delta}  \| g^{(n)}(s)\|_{\tilde Y_{\tau^{(n)}(s)}^{N+1}} ds  
\notag\\
&\leq \|g_0\|_{X_{2\tau_0}^N} + \frac{\nu}{8} \int_0^t \ss^{5/4-\delta}  \| g^{(n-1)}(s)\|_{\tilde Y_{\tau^{(n-1)}(s)}^{N+1}} ds 
\notag\\
&\quad +  8^{N+4} K^2  \left( \sup_{[0,t]} \ss^{5/4-\delta} \|g^{(n-2)(s)}\|_{X^{N}_{\tau^{(n-2)}(s)}}\right)  \left(\sup_{[0,t]} \ss^{2\delta-1}\right)
\notag\\
&\qquad \qquad \qquad \times  \int_0^t  \ss^{5/4-\delta}  \|g^{(n-2)}(s)\|_{\tilde B^{N}_{\tau^{(n-1)}(s)}} \|g^{(n-1)}(s)\|_{Y^{N}_{\tau^{(n-1)}(s)}}ds  
\notag\\
&\quad + \frac{2^{N+2} K}{\tau_0^{1/2}} \int_0^t  \ss^{5/4-\delta} \|g^{(n-2)}(s)\|_{B^{N}_{\tau^{(n-1)}(s)}} \|g^{(n-1)}(s)\|_{Y^{N}_{\tau^{(n-1)}(s)}} ds 
\notag\\
&\leq \eps + \frac{\nu}{8} \int_0^t \ss^{5/4-\delta}  \| g^{(n-1)}(s)\|_{\tilde Y_{\tau^{(n-1)}(s)}^{N+1}} ds\notag\\
&\quad + \frac{8^{N} K}{2 \tau_0^{1/2}}
\int_0^t  {\ss^{5/4-\delta}} 
\left( \|g^{(n-2)}(s)\|_{B^{N}_{\tau^{(n-1)}(s)}} + \|g^{(n-2)}(s)\|_{\tilde B^{N}_{\tau^{(n-1)}(s)}}  \right)  \|g^{(n-1)}(s)\|_{Y^{N}_{\tau^{(n-1)}(s)}} ds\notag\\
&\leq 2 \eps
\label{eq:constr:3}
\end{align}
holds for all $t\in [0,T_{\eps,N}]$.
In the second to last inequality above we have used that 
\[
\max \{8^4 K\eps  \tau_0^{1/2}, 4^{-N+2} \} \leq 1,
\]
which holds if $N \geq 2$ and $\tau_0^{1/2} \eps$ is less than a small universal constant (which was assumed in \eqref{eq:eps:tau:0}). This concludes the proof of the $n$-independent bounds \eqref{eq:tau:n:bnd}--\eqref{eq:cc} for the $g^{(n)} = g^{(n,\nu)}$. 

In order to show that the Picard approximation converges, we next show that the difference 
\begin{align*}
\bar g^{(n)} = g^{(n)} - g^{(n-1)}
\end{align*}
contracts exponentially in a suitable weighted Sobolev space, of order $N-1$ in $x$. For this purpose, for $n\geq 1$ define 
the decreasing function $\bar \tau^{(n)}(t)$ by 
\begin{align}
&\frac{d}{dt} {\bar \tau^{(n)}}(t) + \frac{8^N K}{\tau_0^{1/2}} \left( \|g^{(n-1)}(t)\|_{B^{N-1}_{5 \tau_0/4}} + \|g^{(n-1)}(t)\|_{\tilde B^{N-1}_{5 \tau_0/4}}  \right) = 0, \qquad \bar \tau^{(n)}(0) = \frac{7 \tau_0}{4},
\label{eq:bar:tau:n}
\end{align}
which by the uniform in $n$ estimate \eqref{eq:cc} obeys 
\[
\bar \tau^{(n)}(t) \geq \frac{5 \tau_0}{4} \qquad \mbox{for} \qquad t\in [0,T_{\eps,N}].
\]
We measure the difference $\bar g^{(n)}$ by
\begin{align*}
A_n :=&\sup_{[0,T_{\eps,N}]} \left( \tt^{5/4-\delta} \| \bar g^{(n)}(t)\|_{X_{\bar \tau^{(n)}(t)}^{N-1}} \right) + \frac{\nu}{4} \int_0^{T_{\eps,N}} \ss^{5/4-\delta} \|\bar  g^{(n)}(s)\|_{\tilde Y^{N}_{\bar \tau^{(n)}(s)}} ds \notag\\
&+ \frac{\delta}{K} \int_0^{T_{\eps,N}} \frac{1}{\ss^\delta} \left( \|\bar g^{(n)}(s)\|_{B_{\bar \tau^{(n)}(s)}^{N-1}} +   \|\bar  g^{(n)}(s)\|_{\tilde B_{\bar \tau^{(n)}(s)}^{N-1}} \right) ds  \notag \\
&\  + \frac{8^N K}{\tau_0^{1/2}} \int_0^{T_{\eps,N}}  \ss^{5/4-\delta} \left( \|g^{(n-1)}(s)\|_{B_{5\tau_0/4}^{N-1}} + \|g^{(n-1)}(s)\|_{\tilde B_{5\tau_0/4}^{N-1}} \right) \|\bar g^{(n)}\|_{Y_{\bar \tau^{(n)}(s)}^{N-1}} ds.
\end{align*}
We claim that the sequence $A_n$ contracts, and prove that 
\begin{align}
A_n \leq \frac{A_{n-1} + A_{n-2}}{4} \label{eq:An:contract}
\end{align}
for all $n \geq 2$. In order to establish \eqref{eq:An:contract} we consider the equation obeyed by $\bar g^{(n)}$ 
\begin{align*}
&\partial_t \bar g^{(n)} - \partial_y^2 \bar g^{(n)} - \nu \partial_x^2 \bar g^{(n)} + \kappa \phi \partial_x \bar g^{(n)} + \frac{1}{\tt} \bar g^{(n)} 
\notag\\
&\qquad = - \uu(g^{(n-2)}) \partial_x \bar g^{(n-1)} - \uu(\bar g^{(n-2)}) \partial_x  g^{(n-2)}
- \vv(\bar g^{(n-1)}) \partial_y g^{(n-2)} - \vv(g^{(n-2)}) \partial_y  \bar g^{(n-2)}
\notag\\
&\qquad \quad +\frac{1}{2\tt} \vv(\bar g^{(n-1)}) \uu(g^{(n-2)}) +\frac{1}{2\tt} \vv(g^{(n-2)}) \uu(\bar g^{(n-2)}).
\end{align*}
Using estimates that are similar to those in Remark~\ref{rem:nonlinear}, the bounds \eqref{eq:constr:1}--\eqref{eq:constr:3}, and using the choice of $\bar \tau^{(n)}$ in \eqref{eq:bar:tau:n} it then follows that 
\begin{align}
&\frac{d}{dt} \| \bar g^{(n)}\|_{X^{N-1}_{\bar \tau^{(n)}}} + \frac{5/4-\delta}{\tt} \| \bar g^{(n)}\|_{X^{N-1}_{\bar \tau^{(n)}}} + \frac{\delta}{K \tt^{5/4}} \left( \| \bar g^{(n)}\|_{B^{N-1}_{\bar \tau^{(n)}}} + \| \bar g^{(n)}\|_{\tilde B^{N-1}_{\bar \tau^{(n)}}} \right) + \frac{\nu}{4} \| \bar g^{(n)}\|_{\tilde Y^{N}_{\bar \tau^{(n)}}}
\notag\\
&\quad + \frac{8^N K}{\tau_0^{1/2}} \left( \| g^{(n-1)}\|_{B^{N-1}_{5\tau_0/4}} + \|  g^{(n-1)}\|_{\tilde B^{N-1}_{5\tau_0/4}} \right) \| \bar g^{(n)}\|_{Y^{N-1}_{\bar \tau^{(n)}}}
\notag\\
&\leq \left(\frac{d}{dt} \bar \tau^{(n)} + \frac{8^N K}{\tau_0^{1/2}}\left(  \| g^{(n-1)}\|_{B^{N-1}_{5\tau_0/4}} + \|  g^{(n-1)}\|_{\tilde B^{N-1}_{5\tau_0/4}} \right) \right)  \| \bar g^{(n)}\|_{Y^{N-1}_{\bar \tau^{(n)}}}
\notag\\
&\quad + \frac{2^N K}{\tau_0^{1/2}} \| \bar g^{(n-2)}\|_{B^{N-1}_{\bar \tau^{(n-2)}}} \|g^{(n-2)}\|_{Y^{N}_{\tau^{(n-2)}}}
+ \frac{\nu}{16}  \|\bar g^{(n-1)}\|_{\tilde Y^{N}_{{\bar \tau^{(n-1)}}}}  
\notag\\
&\quad + \left(  \frac{4^N K}{\tau_0^{1/2}} \| g^{(n-2)} \|_{B^{N-1}_{5\tau_0/4}} + 8^{N} K^2 \tt^{1/4} \|g^{(n-2)}\|_{X^{N-1}_{5\tau_0/4}} \|g^{(n-2)}\|_{\tilde B^{N-1}_{5 \tau_0/4}}  \right)  \| \bar g^{(n-1)}\|_{Y^{N-1}_{\bar \tau^{(n-1)}}}  
\notag\\
&\leq  \frac{N 2^{N+1} K}{\tau_0^{3/2}} \| \bar g^{(n-2)}\|_{B^{N-1}_{\bar \tau^{(n-2)}}} \|g^{(n-2)}\|_{X^{N}_{\tau^{(n-2)}}}
+ \frac{\nu}{16}  \|\bar g^{(n-1)}\|_{\tilde Y^{N}_{{\bar \tau^{(n-1)}}}}  
\notag\\
&\quad + \frac 14 \left(  \frac{8^N K}{\tau_0^{1/2}}  \left( \| g^{(n-2)} \|_{B^{N-1}_{5\tau_0/4}} + \|g^{(n-2)}\|_{\tilde B^{N-1}_{5 \tau_0/4}}\right)  \right)  \| \bar g^{(n-1)}\|_{Y^{N-1}_{\bar \tau^{(n-1)}}}  
\label{eq:constr:4}
\end{align}
for $t \in [0,T_{\eps,N}]$. The proof of \eqref{eq:An:contract}  now follows from \eqref{eq:constr:4} upon integrating in time, recalling that $\delta = \eps \log \frac{1}{\eps}$,  that $(\eps, \tau_0)$ obey \eqref{eq:eps:tau:0}, and that the bound
\[
\frac{N 2^{N+1} K}{\tau_0^{3/2}}  \|g^{(n-2)}\|_{X^{N}_{\tau^{(n-2)}}} \leq \frac{4 \eps N 2^N K}{\tau_0^{3/2} \tt^{5/4-\delta}}   \leq \frac{\delta}{4 K \tt^{5/4}}  \frac{16 \eps N 2^N K^2 \langle T_{\eps,N} \rangle^\delta}{\tau_0^{3/2} \delta} \leq \frac{\delta}{4 K \tt^{5/4}} 
\]
holds in view of the bound \eqref{eq:cc} and the definition of $T_{\eps,N}$ in \eqref{eq:tau:eps:n}. Thus, we have proven \eqref{eq:An:contract}, from which it follows that 
\[
0 \leq A_n \leq a_0 \left(\frac{\sqrt{17}-1}{8}\right)^n  + a_1  \left(\frac{\sqrt{17}+1}{8}\right)^n \to 0 \quad \mbox{as} \quad n \to \infty
\] 
where $a_0,a_1 >0$ are determined from computing $A_1$ and $A_2$. This concludes the proof of convergence for the Picard iteration scheme \eqref{eq:Picard:1}--\eqref{eq:Picard:2} on $[0,T_{\eps,N}]$. The convergence holds in the norm defined by $A_n$. Moreover, the available bounds are sufficient in order to show that the limiting function $\gnu$ obeys \eqref{eq:gnu:1}--\eqref{eq:gnu:4} pointwise in $x$ when integrated against $H^1(\theta_\alpha dy)$ functions of $y$.

\subsection{A posteriori estimates for the dissipative approximation}
Having constructed solutions $\gnu$ of \eqref{eq:gnu:1}--\eqref{eq:gnu:4} with finite Sobolev regularity  in $x$ (of order $N \geq 1/\nu$),  we a posteriori show that these solutions obey better bounds, and in particular, are real-analytic with respect to $x$. 

For this purpose, we would like to perform estimates similar to those in the previous subsection, and pass $N \to \infty$. The main obstruction to directly using the bound \eqref{eq:cc} and passing $N \to \infty$ is that the time of existence we have so far guaranteed for $g^{(\nu)}$ is $T_{\eps,N}$ defined by \eqref{eq:tau:eps:n}, and thus depends on $N$ itself. Thus, the first step is to show that $g^{(\nu)}$ obeys $\nu$-independent Sobolev bounds on a time interval $T_\eps$ that is independent of $N$ (and $\nu$). 

As before, let $N$ be such that
$N \nu \geq 1$ with the caveat that we will in this subsection look for bounds independent of $N$.
Let $K$ be the constant from \eqref{eq:constr:1}, and define $\tau_N^{(\nu)}(t)$ by
\begin{align}
\frac{d}{dt}  {\tau}_N^{(\nu)} 
&+ \frac{4 K}{\tau_0^{1/2}} \left( \| g^{(\nu)}\|_{B^N_{ {\tau}_N^{(\nu)}}}  + \| g^{(\nu)} \|_{\tilde B^N_{\tau_N^{(\nu)}}} \right)  + 16 K^2 \tt^{1/4}  \| g^{(\nu)}\|_{X^N_{ {\tau}_N^{(\nu)}}} \| g^{(\nu)}\|_{ \tilde B^N_{ {\tau}_N^{(\nu)}}} = 0, 
\label{eq:tau:nu:N}
\end{align}
with initial value $\tau_N^{(\nu)}(0) =  7 \tau_0/4$.
For each $N$, this is a  first order ODE, with a degree $N$ polynomial nonlinearity in $\tau^{(\nu)}_N$. Due to the a priori bounds \eqref{eq:cc} inherited by $g^{(\nu)}$, at least on $[0,T_{\eps,N}]$ the ODE \eqref{eq:tau:nu:N} has an absolutely continuous solution. We let $T_{\eps,N}^*$ be the maximal time for which $\tau^{(\nu)}_N$ stays above $5\tau_0/4$. On $[0,T_{\eps,N}^*]$ all the estimates in the previous section are justified.
We already have shown that $T_{\eps,N}^* \geq T_{\eps,N}$, and we now claim that $T_{\eps,N}^* \geq T_\eps$ for the $T_\eps >0$ defined in \eqref{eq:T:eps:*}, which is independent of $N$ and $\nu$.

With $\tau^{(\nu)}_N$ as defined above, and $N \geq \nu^{-1}$ arbitrary, we perform an estimate in the spirit of \eqref{eq:constr:1}--\eqref{eq:constr:2}, use the definition of $\tau_N^{(\nu)}$ in \eqref{eq:tau:nu:N}, and arrive at
\begin{align}
&\frac{d}{dt} \|g^{(\nu)}\|_{X^{N}_{\tau^{(\nu)}_N}} + \frac{5/4-\delta}{\tt} \|g^{(\nu)}\|_{X^{N}_{\tau^{(\nu)}_N}} 
+ \frac{\delta}{K \tt^{5/4}} \left( \|g^{(\nu)}\|_{B^{N}_{\tau^{(\nu)}_N}} + \|g^{(\nu)}\|_{\tilde B^{N}_{\tau^{(\nu)}_N}} \right) \notag\\
&\qquad + \frac{K}{\tau_0^{1/2}} \left( \|g^{(\nu)}\|_{B^{N}_{\tau^{(\nu)}_N}} +  \|g^{(\nu)}\|_{\tilde B^{N}_{\tau^{(\nu)}_N}} \right) \|g^{(\nu)}\|_{Y_{\tau^{(\nu)}_N}^{N}}  
+  \frac{\nu}{8} \|g^{(\nu)}\|_{\tilde Y^{N+1}_{\tau^{(\nu)}_N}} 
\notag\\
& \leq \left( \frac{d}{dt} \tau^{(\nu)}_N  + \frac{K}{\tau_0^{1/2}}  \left( \|g^{(\nu)}\|_{B^{N}_{\tau^{(\nu)}_N}} + \|g^{(\nu)}\|_{\tilde B^{N}_{\tau^{(\nu)}_N}} \right) \right) \|g^{(\nu)}\|_{Y_{\tau^{(\nu)}_N}^{N}} 
- \frac{\nu}{8} \|g^{(\nu)}\|_{\tilde Y^{N+1}_{\tau^{(\nu)}_N}} 
\notag\\
&\qquad 
+ \frac{K}{(\tau^{(\nu)}_N)^{1/2}} \|g^{(\nu)}\|_{B^{N}_{\tau^{(\nu)}_N}} Y_{N+1}(g^{(\nu)},\tau^{(\nu)}_N)
+  \frac{K}{(\tau^{(\nu)}_N)^{1/2}} \|g^{(\nu)}\|_{B^{N}_{\tau^{(\nu)}_N}} \|g^{(\nu)}\|_{Y^{N}_{\tau^{(\nu)}_N}}  
\notag\\
& \leq \left( \frac{d}{dt} \tau^{(\nu)}_N  + \frac{3 K}{\tau_0^{1/2}}  \left( \|g^{(\nu)}\|_{B^{N}_{\tau^{(\nu)}_N}} + \|g^{(\nu)}\|_{\tilde B^{N}_{\tau^{(\nu)}_N}} \right) \right) \|g^{(\nu)}\|_{Y_{\tau^{(\nu)}_N}^{N}} - \frac{\nu}{8} \|g^{(\nu)}\|_{\tilde Y^{N+1}_{\tau^{(\nu)}_N}} 
\notag\\
&\qquad + \frac{\nu}{8} \|g^{(\nu)}\|_{\tilde Y^{N+1}_{\tau^{(\nu)}_N}} 
+ 16 K^2 \tt^{1/4} \|g^{(\nu)}\|_{X^N_{\tau^{(\nu)}_N}} \|g^{(\nu)}\|_{\tilde B^{N}_{\tau^{(\nu)}_N}} \| g^{(\nu)}\|_{Y^N_{\tau^{(\nu)}_N}} 
\notag\\
& \leq 0
\label{eq:constr:5}
\end{align}
where $K \geq 1$ is a universal constant. Integrating the above on $[0,T]$ and using that $\|g_0\|_{X_{2\tau_0}}\leq \eps$, for any $N \geq 1/\nu$ we obtain
\begin{align}
& \sup_{t \in [0,T]} \left( \tt^{5/4-\delta} \|g^{(\nu)}(t)\|_{X^{N}_{\tau^{(\nu)}_N(t)}}\right)
\notag\\
&\quad + \frac{\delta}{K} \int_0^T \frac{1}{\ss^{\delta}} \left( \|g^{(\nu)}(s)\|_{B^{N}_{\tau^{(\nu)}_N(s)}} + \|g^{(\nu)}(s)\|_{\tilde B^{N}_{\tau^{(\nu)}_N(s)}} \right) ds 
\notag\\
&\quad + \frac{K}{\tau_0^{1/2}} \int_0^T  \ss^{5/4-\delta}
\left( \|g^{(\nu)}(s)\|_{B^{N}_{\tau^{(\nu)}_N(s)}} + \|g^{(\nu)}(s)\|_{\tilde B^{N}_{\tau^{(\nu)}_N(s)}} \right) 
\|g^{(\nu)}(s)\|_{Y_{\tau^{(\nu)}_N(s)}^{N}} ds 
\notag\\
&\leq \eps.
\label{eq:constr:6}
\end{align}
Estimate \eqref{eq:constr:6} above implies that
\begin{align}
&\frac{4 K}{\tau_0^{1/2}} \int_0^t \|g^{(\nu)}(s)\|_{B^{N}_{\tau^{(\nu)}_N(s)}} ds \leq \frac{4K^2 \eps \tt^\delta}{\delta \tau_0^{1/2}} = \frac{4 K^2 \tt^\delta}{\tau_0^{1/2} \log \frac 1 \eps}
\label{eq:need:a:beer}\\
& 16K^2 \int_0^t \ss^{1/4} \|g^{(\nu)}(s)\|_{X^{N}_{\tau^{(\nu)}_N(s)}} \|g^{(\nu)}(s)\|_{\tilde B^{N}_{\tau^{(\nu)}_N(s)}} ds \leq \frac{16 K^3 \eps^2}{\delta} = \frac{16 K^3 \eps}{\log \frac{1}{\eps}} \leq \frac{4 K^2 \tt^\delta}{\tau_0^{1/2} \log \frac 1 \eps} \notag
\end{align}
upon appealing to \eqref{eq:eps:tau:0}.
Inserted in \eqref{eq:tau:nu:N}, the above bounds a posteriori show that 
\begin{align}
{\tau}_N^{(\nu)}(t)\geq \frac{7 \tau_0}{4} - \frac{8 K^2 \tt^\delta}{\tau_0^{1/2} \log \frac 1 \eps} \geq \frac{5 \tau_0}{4} \label{eq:tau:nu:N:lower:bnd}
\end{align}
for all $t \leq T_\eps$, as long as $T_\eps$ obeys
\begin{align*}
\langle T_\eps \rangle^\delta \leq \frac{\tau_0^{3/2}  \log \frac 1 \eps}{16 K^2} .
\end{align*}
It is clear that the $T_\eps$ defined earlier in \eqref{eq:T:eps:*} obeys the above estimate if $K_*$ is taken sufficiently large.
This shows that $T_{\eps,N}^* \geq T_\eps$ for each $N \geq \nu^{-1}$.
Moreover, the  bound \eqref{eq:tau:nu:N:lower:bnd} which combined with \eqref{eq:constr:6} yields
\begin{align}
& \sup_{t \in [0,T_\eps]} \left( \tt^{5/4-\delta} \|g^{(\nu)}(t)\|_{X^{N}_{5 \tau_0/4}}\right) + \frac{\delta}{K} \int_0^{T_\eps} \frac{1}{\ss^{\delta}} \left( \|g^{(\nu)}(s)\|_{B^{N}_{5 \tau_0/4}} + \|g^{(\nu)}(s)\|_{\tilde B^{N}_{5 \tau_0/4}} \right) ds 
\notag\\
&\quad + \frac{K}{\tau_0^{1/2}} \int_0^{T_\eps} \ss^{5/4-\delta}
\left( \|g^{(\nu)}(s)\|_{B^{N}_{5 \tau_0/4}} + \|g^{(\nu)}(s)\|_{\tilde B^{N}_{5 \tau_0/4}} \right) 
\|g^{(\nu)}(s)\|_{Y_{5 \tau_0/4}^{N}} ds 
\notag\\
&\leq \eps,
\label{eq:constr:7}
\end{align}
for any $N\geq 1$, 
 where $K\geq 1$ is a fixed universal constant. Note that upon passing $N\to \infty$ in \eqref{eq:constr:7}, and using the Monotone Convergence Theorem, we also obtain the bound
 \begin{align}
& \sup_{t \in [0,T_\eps]} \left( \tt^{5/4-\delta} \|g^{(\nu)}(t)\|_{X_{5 \tau_0/4}}\right) + \frac{\delta}{K} \int_0^{T_\eps} \frac{1}{\ss^{\delta}} \left( \|g^{(\nu)}(s)\|_{B_{5 \tau_0/4}} + \|g^{(\nu)}(s)\|_{\tilde B_{5 \tau_0/4}} \right) ds 
\notag\\
&\quad + \frac{K}{\tau_0^{1/2}} \int_0^{T_\eps} \ss^{5/4-\delta}  
\left( \|g^{(\nu)}(s)\|_{B_{5 \tau_0/4}} + \|g^{(\nu)}(s)\|_{\tilde B_{5 \tau_0/4}} \right) 
\|g^{(\nu)}(s)\|_{Y_{5 \tau_0/4}} ds 
\notag\\
&\leq 4 \eps,
\label{eq:constr:7:infty}
\end{align}
for the real-analytic norms of $g^{(\nu)}$. Due to the monotonicity of the norms with respect to $\tau$, this proves \eqref{eq:gnu:bnd}.

In order to obtain a limiting analyticity radius $\tau^{(\nu)}$ in the limit as $N \to \infty$, which obeys the nonlinear ODE \eqref{eq:tau:nu}, we may first try to show that the sequence of absolutely continuous functions $\{ \tau^{(\nu)}_N \}_{N \geq \nu^{-1}}$ is in fact equicontinuous on the time interval $[0,T_\eps]$. This seems however not possible due to the third term on the left side of \eqref{eq:tau:nu:N}. We instead define a new sequence of radii $\theta^{(\nu)}_N$, for which the trick used to prove uniqueness in Section~\ref{sec:uniqueness} applies, and we are able to prove that $\{\theta^{(\nu)}_N\}_{N\geq \nu^{-1}}$ is uniformly equicontinuous (in fact uniformly H\"older-$1/2$ in time). Let
\begin{align}
\frac{d}{dt} \theta^{(\nu)}_N + \frac{2 K}{(\theta^{(\nu)}_N)^{1/2}} \| g^{(\nu)} \|_{B^{N}_{\theta^{(\nu)}_N}} =  0, \qquad \theta^{(\nu)}_N(0) = \tau_0.
\label{eq:theta:n:def}
\end{align}
The existence of solutions to \eqref{eq:theta:n:def} is immediate since the nonlinearity is a polynomial of finite degree, with coefficients that are integrable in time by \eqref{eq:constr:7:infty}.
We next observe that in view of \eqref{eq:need:a:beer}, by using a version of \eqref{eq:tau:nu:N:lower:bnd}, we arrive at
\begin{align*}
\theta^{(\nu)}_N(t) \geq \frac{\tau_0}{2} \quad \mbox{for all} \quad t \in [0,T_\eps].
\end{align*}
Now, similarly to \eqref{eq:constr:5} we have that
\begin{align}
&\frac{d}{dt} \|g^{(\nu)}\|_{X^{N}_{\theta^{(\nu)}_N}} + \frac{5/4-\delta}{\tt} \|g^{(\nu)}\|_{X^{N}_{\theta^{(\nu)}_N}} 
+ \frac{\delta}{K \tt^{5/4}}  \|g^{(\nu)}\|_{B^{N}_{\theta^{(\nu)}_N}} + \frac{\delta}{2 K \tt^{5/4}} \|g^{(\nu)}\|_{\tilde B^{N}_{\theta^{(\nu)}_N}}
  \notag\\
&\qquad + \frac{ K}{(\theta^{(\nu)}_N)^{1/2}}  \|g^{(\nu)}\|_{B^{N}_{\theta^{(\nu)}_N}}  \|g^{(\nu)}\|_{Y_{\theta^{(\nu)}_N}^{N}}  
+  \frac{\nu}{8} \|g^{(\nu)}\|_{\tilde Y^{N+1}_{\theta^{(\nu)}_N}} \notag\\
& \leq \left( \frac{d}{dt} \theta^{(\nu)}_N  + \frac{ K}{(\theta^{(\nu)}_N)^{1/2}}    \|g^{(\nu)}\|_{B^{N}_{\theta^{(\nu)}_N}}  \right) \|g^{(\nu)}\|_{Y_{\theta^{(\nu)}_N}^{N}} 
- \frac{\nu}{8} \|g^{(\nu)}\|_{\tilde Y^{N+1}_{\theta^{(\nu)}_N}}  - \frac{\delta}{2 K \tt^{5/4}} \|g^{(\nu)}\|_{\tilde B^{N}_{\theta^{(\nu)}_N}}
\notag\\
&\qquad 
+ \frac{K}{(\theta^{(\nu)}_N)^{1/2}} \|g^{(\nu)}\|_{B^{N}_{\theta^{(\nu)}_N}} Y_{N+1}(g^{(\nu)},\theta^{(\nu)}_N)
+  \frac{K}{(\theta^{(\nu)}_N)^{1/2}} \|g^{(\nu)}\|_{B^{N}_{\theta^{(\nu)}_N}} \|g^{(\nu)}\|_{Y^{N}_{\theta^{(\nu)}_N}}  
\notag\\
& \leq \left( \frac{d}{dt} \theta^{(\nu)}_N  + \frac{2 K}{(\theta^{(\nu)}_N)^{1/2}}  \left( \|g^{(\nu)}\|_{B^{N}_{\theta^{(\nu)}_N}} + \|g^{(\nu)}\|_{\tilde B^{N}_{\theta^{(\nu)}_N}} \right) \right) \|g^{(\nu)}\|_{Y_{\theta^{(\nu)}_N}^{N}} - \frac{\delta}{2 K \tt^{5/4}} \|g^{(\nu)}\|_{\tilde B^{N}_{\theta^{(\nu)}_N}}
\notag\\
&\qquad + \frac{16 K^2 \tt^{1/4}}{\nu N}  \|g^{(\nu)}\|_{X^N_{\tau_0}} \|g^{(\nu)}\|_{\tilde B^{N}_{\theta^{(\nu)}_N}} \| g^{(\nu)}\|_{Y^N_{\tau_0}}
\notag\\
& \leq \left( \frac{16 K^3 \tt^{1/4}}{\nu N \tau_0}  \|g^{(\nu)}\|_{X^N_{5\tau_0/4}}^2 - \frac{\delta}{2 K\tt^{5/4}} \right) \|g^{(\nu)}\|_{\tilde B^{N}_{\theta^{(\nu)}_N}}
\notag\\
&\leq 0.
\label{eq:constr:8}
\end{align}
In the last inequality of \eqref{eq:constr:8} we have used the same trick as in Section~\ref{sec:uniqueness}: that by \eqref{eq:constr:7} we have
\begin{align*}
\sup_{t\in [0,T_\eps]} \left( \frac{32 K^4}{\tau_0} \tt^{3/2} \|g^{(\nu)}\|_{X_{5 \tau_0/4}}^2 \right) \leq  \frac{32 K^4 \eps^2}{\tau_0} \leq \eps \log \frac{1}{\eps} = \delta
\end{align*}
upon appealing to assumption \eqref{eq:eps:tau:0}. 

Using the bound \eqref{eq:constr:8}, we show that the sequence of absolutely continuous functions $\{\theta^{(\nu)}_N\}_{N\geq \nu^{-1}}$, is in fact uniformly bounded in $C^{1/2}([0,T_\eps])$, and thus uniformly equicontinuous. For this purpose, let $t_1,t_2 \in [0,T_\eps]$ be such that $|t_1-t_2|\leq \zeta$. Using the mean value theorem, the definition of $\theta^{(\nu)}_N$ in \eqref{eq:theta:n:def}, and the definitions \eqref{eq:XmN}--\eqref{eq:tildeBmN} we arrive at 
\begin{align}
| \theta^{(\nu)}_N(t_1) - \theta^{(\nu)}_N(t_2) | 
&\leq \frac{4 K}{\tau_0^{1/2}} \int_{t_1}^{t_2} \| g^{(\nu)}(s)\|_{B^{N}_{\theta_N^{(\nu)}(s)}} ds
\notag\\
&\leq \frac{4 K}{\tau_0^{1/2}} \int_{t_1}^{t_2} \ss^{1/8} \| g^{(\nu)}(s)\|_{X^{N}_{\theta_N^{(\nu)}(s)}}^{1/2} \| g^{(\nu)}(s)\|_{\tilde B^{N}_{\theta_N^{(\nu)}(s)}}^{1/2} ds
\notag\\
&\leq \frac{16 K^2|t_1-t_2|}{\zeta^{1/2} \tau_0} + \zeta^{1/2} \int_{t_1}^{t_2} \ss^{1/4} \| g^{(\nu)}(s)\|_{X^{N}_{\theta_N^{(\nu)}(s)}} \| g^{(\nu)}(s)\|_{\tilde B^{N}_{\theta_N^{(\nu)}(s)}} ds 
\notag\\
&\leq \left( \frac{16 K^2 }{\tau_0} + \frac{16 K \eps^2}{\delta} \right) \zeta^{1/2}
\label{eq:constr:9}
\end{align}
Since $\zeta \in (0,1)$ was arbitrary,  it follows from \eqref{eq:constr:9} that the $\theta^{(\nu)}_N$ are uniformly equicontinous. The Arzela-Ascoli theorem guarantees the existence of a subsequence $\theta^{(\nu)}_{N_k}$ with $N_k \to \infty$ as $k \to \infty$, and of a function $\tau^{(\nu)}$ such that
\begin{align*}
\theta^{(\nu)}_{N_k} \to \tau^{(\nu)} \quad \mbox{uniformly on} \quad [0,T_\eps] \quad \mbox{as} \quad k \to \infty.
\end{align*}
Moreover, we have that $\tau^{(\nu)} \geq \tau_0/2$ on $[0,T_\eps]$. By passing $N=N_k \to \infty$ in \eqref{eq:theta:n:def} we obtain 
\begin{align}
\frac{d}{dt} \tau^{(\nu)} + \frac{2K}{(\tau^{(\nu)})^{1/2}} \| g^{(\nu)} \|_{B_{\tau^{(\nu)}}} = 0, \qquad \tau^{(\nu)}(0) = \tau_0.
\label{eq:tau:nu:def:2}
\end{align}
In order to justify \eqref{eq:tau:nu:def:2} we use that by~\eqref{eq:constr:7:infty} we have that $\|g^{(\nu)}\|_{B_{5 \tau_0/4}} \in L^1([0,T_\eps])$, and that the convergence of $\tau^{(\nu)}_{N_k} \to \tau^{(\nu)}$ is uniform. 
Moreover, using \eqref{eq:constr:7:infty} and a bound similar to \eqref{eq:constr:9}, it follows from \eqref{eq:tau:nu:def:2} that 
\begin{align*}
| \tau^{(\nu)}(t_1) - \tau^{(\nu)}(t_2) | \leq \left( \frac{16 K^2 }{\tau_0} + \frac{16 K \eps^2}{\delta} \right) |t_1-t_2|^{1/2}
\end{align*}
uniformly for $t_1,t_2 \in [0,T_\eps]$.
That is, the radii $\tau^{(\nu)}$ are uniformly (with respect to $\nu$) H\"older $1/2$ continuous. This concludes the proof of \eqref{eq:gnu:bnd}--\eqref{eq:tau:nu}.

\subsection{Existence of solutions to the Prandtl system}

It remains to pass $\nu \to 0$ and obtain a limiting solution $g$ of the Prandtl equations \eqref{eq:g:1}--\eqref{eq:g:3}, in the sense of Definition~\ref{def:sol}, of a tangential analyticity radius $\tau$ which solves \eqref{eq:tau:1}, such that the pair $(g,\tau)$ obeys the bounds \eqref{eq:g:bnd:0}--\eqref{eq:g:bnd:3}.

We have shown in the previous subsection that the sequence $\tau^{(\nu)}$ is uniformly equicontinuous, and thus by the Arzela-Ascoli theorem we know that along a subsequence $\nu_k \to 0$, we have that $\tau^{(\nu_k)} \to \tau$ uniformly on $[0,T_\eps]$, with $\tau(0)=\tau_0$, and $\tau(t) \geq \tau_0/2$ on this interval. Note that from the bound \eqref{eq:constr:7:infty} it follows that \eqref{eq:gnu:bnd} holds with $\tau^{(\nu)}$ replaced by the limiting function $\tau$.

Without loss of generality, the above subsequence $\{ \nu_k\}_{k\geq 1}$ obeys 
\[
0 < \frac{1}{\nu_{k+1}} - \frac{1}{\nu_k} \leq \frac{1}{k^2}.
\] 
We next show that the subsequence $g^{(\nu_k)}$ is Cauchy in the norm induced by the left side of \eqref{eq:gnu:bnd}. For this purpose, let 
\[ \bar g_k = g^{(\nu_k)} - g^{(\nu_{k+1})}\]
and define
\begin{align*}
G_k 
:=&\sup_{[0,T_{\eps}]} \left( \tt^{5/4-\delta} \| \bar g_k(t)\|_{X_{\bar \tau_k(t)}} \right) + \frac{\delta}{2K} \int_0^{T_{\eps}} \frac{1}{\ss^\delta}  \|\bar g_k(s)\|_{B_{\bar \tau_k(s)}}   ds  \notag \\
&\  + K \int_0^{T_{\eps}} \frac{\ss^{5/4-\delta}}{\bar \tau_k (s)^{1/2}} 
\|g^{(\nu_{k})}(s)\|_{B_{5\tau_0/4}} \|\bar g_k\|_{Y_{\bar \tau_k(s)}} ds
\end{align*}
where 
\begin{align} 
\frac{d}{dt} \bar \tau_k + \frac{2K}{\bar \tau_k^{1/2}}   \| g^{(\nu_k)} \|_{B_{5\tau_0/4}} = 0, \qquad \bar \tau_k (0) = \frac{6 \tau_0}{5}.
\label{eq:one:more:radius}
\end{align}
Note that from the bound \eqref{eq:constr:7:infty}, upon choosing $K_*$ sufficiently large, we obtain that
\begin{align*} 
\bar \tau_k(t) \geq \tau_0 \quad \mbox{for all} \quad t \in [0,T_\eps].
\end{align*}
We claim that 
\begin{align} 
G_k \leq \frac{1}{k^2 \tau_0^2}.
\label{eq:to:do:last}
\end{align}
To prove \eqref{eq:to:do:last}, we consider the equation obeyed by $\bar g_k$
\begin{align*} 
&\partial_t \bar g_k - \partial_y^2 \bar g_k + \kappa \phi \partial_x \bar g_k + \frac{1}{\tt} \bar g_k - \nu_k \partial_x^2 \bar g_k\notag\\
&\quad = (\nu_k - \nu_{k+1}) \partial_x^2 g^{(\nu_{k+1})} 
- \uu(g^{(\nu_k)}) \partial_x \bar g_k - \uu(\bar g_k) \partial_x g^{(\nu_{k+1})} \notag\\
& \qquad - \vv(g^{(\nu_{k+1})}) \partial_y \bar g_k - \vv(\bar g_k) \partial_y g^{(\nu_{k})}
+ \frac{1}{2\tt} \uu(g^{(\nu_k)}) \vv(\bar g_k) +\frac{1}{2\tt} \uu(\bar g_k) \vv(g^{(\nu_{k+1})}).
\end{align*}
Similarly to \eqref{eq:diff:2} we obtain that 
\begin{align}
&\frac{d}{dt} \| \bar g_k\|_{X_{\bar \tau_k(t)}} + \frac{5/4-\delta}{\tt} \| \bar g_k\|_{X_{\bar \tau_k(t)}} + \frac{\delta}{K \tt^{5/4}} \| \bar g_k\|_{B_{\bar \tau_k(t)}} 
 + \frac{\nu_k}{4} \| \bar g_k\|_{\tilde Y_{\bar \tau_k(t)}} + \frac{K}{\bar \tau_k^{1/2}} \| g^{(\nu_k)} \|_{B_{\bar \tau_k}} \| \bar g_k\|_{Y_{\bar \tau_k(t)}} 
\notag\\
&\qquad \leq \left( \frac{d}{dt} \bar \tau_k + \frac{2K}{\bar \tau_k^{1/2}} \| g^{(\nu_k)} \|_{B_{\bar \tau_k}} \right) \| \bar g_k\|_{Y_{\bar \tau_k(t)}} 
 + |\nu_k - \nu_{k+1}| \|\partial_x^2 g^{(\nu_{k+1})}\|_{X_{\bar \tau_k}} + \frac{K}{\bar \tau_k^{1/2}} \|\bar g_k\|_{B_{\bar \tau_k}}
\|g^{(\nu_{k+1})}\|_{Y_{\bar \tau_k}} 
\notag\\
&\qquad \leq \left( \frac{d}{dt} \bar \tau_k + \frac{2K}{\bar \tau_k^{1/2}} \| g^{(\nu_k)} \|_{B_{5\tau_0/4}} \right) \| \bar g_k\|_{Y_{\bar \tau_k(t)}} 
+ \frac{K}{k^2 \bar \tau_k^{2}} \|g^{(\nu_{k+1})}\|_{X_{5\tau_0/4}} 
+ \frac{K^2}{\bar \tau_k^{3/2}} \|\bar g_k\|_{B_{\bar \tau_k}}  \|g^{(\nu_{k+1})}\|_{X_{5\tau_0/4}} 
\label{eq:constr:11}.
\end{align}
In the last inequality above we used that \eqref{eq:one:more:radius} implies that $\bar \tau_k \leq 6 \tau_0/5 < 5 \tau_0 / 4$, and thus
\[
\left( \sup_{m\geq 0} \left( \frac{\bar \tau_k}{5 \tau_0/4 } \right)^{m+2} \frac{M_m}{M_{m+2}} \right) + \left( \sup_{m\geq 0} \left( \frac{\bar \tau_k}{5 \tau_0/4 } \right)^{m} m \right) \leq K
\]
upon possibly increasing the value of $K$. Using the definition of $\bar \tau_k$ we obtain from \eqref{eq:constr:11} that 
\begin{align}
&\frac{d}{dt} \| \bar g_k\|_{X_{\bar \tau_k(t)}} + \frac{5/4-\delta}{\tt} \| \bar g_k\|_{X_{\bar \tau_k(t)}} + \frac{\delta}{2 K \tt^{5/4}} \| \bar g_k\|_{B_{\bar \tau_k(t)}} + \frac{K}{\bar \tau_k^{1/2}} \| g^{(\nu_k)} \|_{B_{\bar \tau_k}} \| \bar g_k\|_{Y_{\bar \tau_k(t)}} 
\notag\\
&\qquad +\frac{\delta}{2 K \tt^{5/4}}
\left(1 - \frac{2 K^3}{\delta \tau_0^{3/2}} \tt^{5/4} \|g^{(\nu_{k+1})}\|_{X_{5\tau_0/4}} \right) \| \bar g_k\|_{B_{\bar \tau_k(t)}} 
\notag\\
&\  \leq \frac{K}{k^2 \tau_0^2}  \|g^{(\nu_{k+1})}\|_{X_{5\tau_0/4}}.
\label{eq:constr:12}
\end{align}
At this stage, we use \eqref{eq:constr:7:infty} and \eqref{eq:T:eps:*} which imply that 
\begin{align*}
\sup_{t\in [0,T_\eps]} \left( \frac{2 K^3}{\delta \tau_0^{3/2}} \tt^{5/4} \|g^{(\nu_{k+1})}(t)\|_{X_{5\tau_0/4}}  \right)\leq \frac{4 K^3}{\delta \tau_0^{3/2}} \eps \langle T_\eps \rangle^\delta = \frac{4 K^3}{\tau_0^{3/2} \log \frac{1}{\eps}} \langle T_\eps \rangle^\delta
\leq 1
\end{align*}
if $K_*$ is sufficiently large.
Then, again appealing to \eqref{eq:constr:7:infty}, upon integrating \eqref{eq:constr:12} in time we obtain
\begin{align*} 
G_k \leq \frac{\eps K}{k^2 \tau_0^2} \leq \frac{1}{k^2 \tau_0^2}.
\end{align*}
From here it follows that $g^{(\nu_k)}$ is a Cauchy sequence in the topology induced by $G_k$. 

Thus, there exists a limiting function $g \in L^\infty([0,T_\eps];X_{\tau_0}) \cap L^1([0,T_\eps];B_{\tau_0}\cap \tilde B_{\tau_0})$ such that $g^{(\nu_k)} \to g$ in this norm. In particular, it immediately follows that  $g \in L^\infty([0,T_\eps];\HCal_{2,1,\alpha/\tt})$ and thus $g$ is a solution of the Prandtl equations \eqref{eq:g:1}--\eqref{eq:g:3} in the sense of Definition~\ref{def:sol}. 

Finally, using these bounds one may show that upon passing $\nu = \nu_k \to 0$ in \eqref{eq:tau:nu:def:2}, the limiting analyticity radius $\tau(t)$ and the solution $g(t)$ obey the ODE \eqref{eq:tau:1}. This concludes the proof of the existence of solutions to \eqref{eq:g:1}--\eqref{eq:g:3}.


\subsection*{Acknowledgments} 
The authors are thankful to Igor Kukavica, Toan Nguyen, and Marius Paicu for stimulating discussions, and to the anonymous referees for helpful suggestions. The work of VV was supported in part by the NSF grants DMS-1348193, DMS-1514771, and an Alfred P. Sloan Fellowship. 

\subsection*{Conflict of Interest} The authors declare that they have no conflict of interest.


\end{document}